\newtheorem{theorem}{Theorem}[section]
\newtheorem*{namedtheorem}{\theoremname}
\newcommand{\theoremname}{testing}
\newtheorem{lemma}[theorem]{Lemma}
\newtheorem{claim}[theorem]{Claim}
\newtheorem{prop}[theorem]{Proposition}
\newtheorem{conjecture}{Conjecture}
\theoremstyle{definition}
\newtheorem{definition}[theorem]{Definition}
\theoremstyle{plain}
\newtheorem{Alg}{Algorithm}
\renewenvironment{proof}{\noindent{\textbf{Proof:}}} {$\blacksquare$\vskip \belowdisplayskip}
\newcommand{\ignore}[1]{}
\newcommand{\eps}{\epsilon}
\newcommand{\calC}{{\mathcal{C}}}
\newcommand{\cF}{\mathcal{F}}
\newcommand{\cP}{\mathcal{P}}
\begin{document}

\title{Nearly Complete Graphs Decomposable into 
Large Induced Matchings and their Applications}
\author{Noga Alon 
\thanks{Sackler School of Mathematics
and Blavatnik School of Computer Science,
Tel Aviv University,
Tel Aviv 69978, Israel
and
Institute for Advanced Study, Princeton, New Jersey,
08540, USA.
Email: {\tt nogaa@tau.ac.il}.
Research supported in part by an ERC Advanced
grant, by a USA-Israeli BSF grant and by NSF grant No.
DMS-0835373.}
\and Ankur Moitra 
\thanks{
Institute for Advanced Study, Princeton, New Jersey,
08540, USA.
Email: {\tt moitra@ias.edu}.
Research supported in part 
by NSF grant No.
DMS-0835373 and by an NSF Computing and Innovation Fellowship.}
\and Benny Sudakov
\thanks{Department of Mathematics, UCLA,  Los Angeles, CA
90095. Email: {\tt bsudakov@math.ucla.edu}.
Research supported in part by NSF grant DMS-1101185, NSF CAREER
award DMS-0812005 and by
a USA-Israeli BSF grant.}
}

\maketitle

\begin{abstract}
We describe two constructions of (very) dense graphs which are edge
disjoint unions of large {\em induced}  matchings. The first construction
exhibits graphs on $N$ vertices with ${N \choose 2}-o(N^2)$ edges,
which can be decomposed into pairwise disjoint induced matchings,
each of size $N^{1-o(1)}$. The second construction provides a
covering of all edges of the complete graph  $K_N$ by two graphs, each
being the edge disjoint union of at most $N^{2-\delta}$ induced
matchings, where $\delta > 0.058$. This disproves (in a strong form) a conjecture of
Meshulam, substantially improves a result of Birk, Linial and Meshulam on
communicating over a shared channel, and (slightly)
extends the analysis of H{\aa}stad and Wigderson of the 
graph test of Samorodnitsky and Trevisan for linearity. Additionally, our constructions
settle a combinatorial question of Vempala regarding a candidate rounding scheme for the directed
Steiner tree problem. 
\end{abstract}

\setcounter{page}{0} \thispagestyle{empty}
\newpage

\section{Introduction}

\subsection{Background}

Dense graphs consisting of large pairwise edge disjoint induced matchings
have found several applications in Combinatorics, Complexity Theory and 
Information Theory. Call a graph $G=(V,E)$ an
$(r,t)$-Ruzsa-Szemer\'edi graph ($(r,t)$-RS graph, for short)  if
its set of edges consists of $t$ pairwise disjoint induced
matchings, each of size $r$. The total number of edges of such a
graph is clearly $rt$. Graphs of this type are useful 
when both $r$ and
$t$ are relatively large  as a function of the number of vertices
$N$.  There are several known interesting constructions, relying on 
a variety of techniques. 

The first surprising construction was given by 
Ruzsa and Szemer\'edi \cite{RS}, who
applied a result of Behrend \cite{Be} about the existence 
of dense subsets of 
$\{1,2,...,\Theta(N)\}$  containing no 3-term arithmetic progressions
to prove that there are $(r,t)$-RS graphs on $N$ vertices with
$r=\frac{N}{e^{O(\sqrt {\log N})}}$ and $t=N/3$. They applied this
construction, together with the regularity lemma of Szemer\'edi
\cite{Sz}, to settle an extremal problem of Brown, Erd\H{o}s and 
S\'os \cite{BES1,BES2}, showing that the maximum possible number of edges
in a $3$-uniform hypergraph on $N$ vertices which contains
no $6$ vertices spanning at least $3$ edges is bigger than
$N^{2-\epsilon}$ and smaller than $\epsilon N^2$, for any $\epsilon
>0$, provided $N> N_0(\epsilon)$. See also \cite{EFR},\cite{AS3}
for more details about this problem, its extensions,  
and their connection to $(r,t)$-RS
graphs.

Note that the above construction provides graphs with $N$ vertices
and $\frac{N^2}{e^{O(\sqrt {\log N})}}$ edges, that is, rather
dense graphs, but still  ones in which the number of edges is
$o(N^2)$. These graphs and some appropriate variants have
been used by the first author in
\cite{Al}, to show that the problem of testing $H$-freeness in
graphs requires a super-polynomial (in $1/\epsilon$)  number of queries 
if and only if $H$ is not bipartite. The proof for one sided error
algorithms is given in \cite{Al}, and an extension  
for two sided algorithms is described in \cite{AS1}. A similar
application of these graphs
for testing induced $H$-freeness appears in \cite{AS2}, and yet
another very recent application  showing that testing graph-perfectness
requires a super polynomial number of queries appears in \cite{AF}.

The above graphs have also been applied by 
H{\aa}stad and Wigderson \cite{HW} to give an improved analysis
of the graph test of Samorodnitsky and Trevisan for linearity and for
PCP with low amortized complexity \cite{ST}. 

Another construction of $(r,t)$-RS graphs on $N$ vertices,
with $r=N/3-o(N)$ and $t=N^{\Omega(1/\log \log N)}$ was given by
Fischer et. al. in \cite{FNRRS}. Note that the matchings here are
of linear size, but their number is much smaller than in the 
original construction of Ruzsa and Szemer\'edi. The construction
here is combinatorial, and Fischer et. al. use these graphs to establish an
$N^{\Omega(1/\log \log N)}$ lower bound for  testing monotonicity
in general posets.

Yet another construction was obtained by Birk, Linial and Meshulam
\cite{BLM}, and in an improved form by Meshulam 
\cite{Me}. For the application in \cite{BLM} it is crucial to
obtain graphs with positive density. Indeed, the graphs here
are $(r,t)$-graphs on $N$ vertices with 
$r=(\log N)^{\Omega(\log \log N/ (\log \log \log N)^2)}$
and $t$ roughly $\frac{N^2}{24r}$. Thus, their number of edges 
is about $N^2/24$.
The method here relies on a construction of low degree
representation of the OR function, due to Barrington, Beigel and
Rudich \cite{BBR}. 
The application in \cite{BLM} is in Information Theory, the
graphs are applied to design an efficient deterministic
scheduling scheme for communicating over a shared
directional multichannel. 

Interestingly, none of these constructions address the question of whether
or not an $(r,t)$-RS graph can simultaneously have positive density and
yet be an edge disjoint union of polynomially large induced matchings.
This range of parameters is important for some applications - especially
ones in which there is a tradeoff between the number of missing edges
and the number of induced matchings needed to cover the graph. Indeed,
Meshulam \cite{Me} conjecture that there are no such graphs. We are able
to disprove this conjecture in the strongest possible sense: The density
of our construction is $1-o(1)$ and yet $r$ is nearly linear in $N$.
We also give a number of applications of our constructions.

\subsection{Our Results}

We construct $(r,t)$-RS graphs on $N$ vertices with $rt=(1-o(1)){N \choose
2}$, and $r=N^{1-o(1)}$. Thus, not only can we have graphs with
positive edge density which are edge disjoint union of induced matchings
of size $N^{\Omega(1)}$, we can in  fact have the edge density 
$1-o(1)$, where the size of each matching is $N^{1-o(1)}$. We also
describe another construction of a partition of the complete graph
$K_N$ into two subgraphs, each being a union of at most
$N^{2-\delta}$ induced matchings, where $\delta > 0.058$. 
The main difference between the new constructions presented here
and the previous ones mentioned above, 
is that the graphs  constructed here are
of density $1-o(1)$, that is, almost all edges of the complete
graph $K_N$ are covered, and yet all these edges can be partitioned
into large pairwise disjoint induced matchings. This surprising
property  turns out to be useful in various applications.

Our first construction is geometric, and is
inspired by the recent work of Fox and Loh \cite{FL} on dense
graphs in which every edge is contained in at least one triangle
and yet no edge is contained in too many triangles. The
construction follows the basic approach of Fox and Loh (slightly modified
according to the remark of the first author, mentioned at the end
of \cite{FL}) with different parameters. An additional (simple) argument
is required in decomposing sparse graphs into not too many induced matchings.
Our second construction applies some basic tools from Coding Theory. Also we make
us of the regularity lemma and some combinatorial and entropy based 
techniques to prove lower bounds for these questions. 

It is worth noting that a general result of Frankl and F\"uredi
\cite{FF}, implies that for any {\em fixed} $r$, there are
$(r,t)$-RS graphs $G$ on $N$ vertices with $rt=(1-o(1)) { N \choose
2}$. This is proved by choosing the non-edges of $G$ randomly
and by applying the nibble technique to obtain the existence
of the desired matchings. Using this method,  however, the induced
matchings obtained  are of constant size, whereas we are interested,
crucially, in large
matchings.  The techniques of \cite{FF} cannot provide induced 
matchings of size exceeding $\Theta( \log N)$.

We apply our results to significantly improve the application in \cite{BLM}. 
As mentioned earlier, Birk, Linial and Meshulam construct $(r,t)$-graphs on $N$ vertices with 
$r=(\log N)^{\Omega(\log \log N/ (\log \log \log N)^2)}$
and $rt$ roughly $\frac{N^2}{24}$. The authors then use these graphs to design
a communication protocol over a shared directional multi-channel -- it is critical for the
application that these graphs have positive density. The communication protocol
based on these graphs achieves a round complexity of $O(\frac{N^2}{r})$ and this is
a slightly better than poly-logarithmic improvement over the naive protocol for bus-based
architectures. 

 We can use 
 our construction to achieve a round complexity 
of $O(N^{2 - \delta})$ over a shared directional multi-channel.
This is the first such protocol that is a polynomial improvement over the naive protocol. 
 We can accomplish this using just two receivers per station (this 
corresponds to a 
 partition of the edges of a complete bipartite graph into two graphs that can be decomposed into large
 induced matchings). In case we are allowed $C = C(\epsilon)$ 
 receivers per station, 
we can achieve a round complexity that 
is $O(N^{1 + \epsilon})$ for any $\epsilon > 0$. 
 Hence, previous protocols required nearly a quadratic number of rounds, and our protocols require
 only a nearly-linear number of rounds. 
 
 Our constructions also disprove the recent conjecture of Meshulam. 
Moreover, we can achieve a density approaching one while simultaneously being able to decompose the graph into
at most a nearly-linear number of induced 
 matchings.

Besides their applications to the problems of \cite{BLM} and \cite{Me}, our
constructions can be plugged in the result of \cite{HW},
extending it to a new range of the parameters that may be of
interest. Lastly, we also answer a question of Vempala \cite{Ve}, showing
that a certain rounding scheme for the directed Steiner tree
problem is not effective.

The rest of this paper is organized as follows. In the next section
we describe the two new constructions. Lower bounds showing that these are
not far from being tight are given in Section 3. In Section 4 we describe
the applications of these graphs. The final Section 5 contains 
some concluding remarks and open problems.

\section{Constructions}

\subsection{A Geometric Construction}\label{sec:geom}

Here we construct nearly-complete graphs that can be
covered by an almost linear number of induced matchings. These graphs will
be based on a geometric construction inspired by a recent construction of
Fox and Loh \cite{FL}. 

We first describe a graph $G = (V, E)$ and then prove 
that it can be slightly modified to yield a nearly complete 
$(r, t)$-RS graph. 
Set $V = [C]^n$ for some constant $C$ to be chosen later. Let
$N = C^n$ be the number of vertices. Each vertex $x \in V$
will be interpreted as an integer vector
in $n$ dimensions with coordinates in $[C]=\{1,2,\ldots ,C\}$, 
where for technical reasons it is convenient to assume
that $n$ is even. Let $\mu = E_{x, y}[\| x - y \|_2^2]$ where $x$ and
$y$ are sampled uniformly at random from $V$. We could compute $\mu$,
but we will not need this exact value.

Next, we describe the set $E$ of edges. A pair of vertices $x$ and $y$ 
are adjacent if and only if $$ \Big | \|x - y \|_2^2 - \mu \Big | \leq n.$$

This condition implies that the number of missing edges is small, 
by a standard application of Hoeffding's Inequality:

\begin{claim}
${N \choose 2} - |E| \leq {N \choose 2} 2e^{-n/2C^4}$
\end{claim}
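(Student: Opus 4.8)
The plan is to bound the number of non-edges by showing that for a uniformly random pair $x, y \in V$, the squared distance $\|x-y\|_2^2$ is tightly concentrated around its mean $\mu$, and then apply a union bound over all ${N \choose 2}$ pairs. First I would write $\|x-y\|_2^2 = \sum_{i=1}^n (x_i - y_i)^2$, and observe that when $x$ and $y$ are each drawn uniformly and independently from $[C]^n$, the coordinates are independent, so this is a sum of $n$ independent random variables $Z_i := (x_i - y_i)^2$, one per coordinate. Each $Z_i$ is bounded: since $x_i, y_i \in \{1, \ldots, C\}$ we have $0 \le Z_i \le (C-1)^2 \le C^2$. By linearity $\E[\|x-y\|_2^2] = \sum_i \E[Z_i] = \mu$ (and by symmetry each $\E[Z_i] = \mu/n$, though we do not need this).

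Next I would invoke Hoeffding's inequality for the sum $S := \sum_{i=1}^n Z_i$ of independent random variables with $Z_i$ taking values in an interval of length at most $C^2$. Hoeffding gives
\[
\Pr\bigl[\,|S - \mu| > n\,\bigr] \le 2\exp\!\left(-\frac{2n^2}{\sum_{i=1}^n (C^2)^2}\right) = 2\exp\!\left(-\frac{2n^2}{n C^4}\right) = 2e^{-2n/C^4}.
\]
This is in fact a slightly stronger bound than the claimed $2e^{-n/2C^4}$; to land exactly on the stated constant one can be more wasteful (e.g. bound the range of $Z_i$ by $(C-1)^2 < C^2$ and use a cruder form of Hoeffding, or simply note the stated inequality is weaker and hence also holds). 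So for a uniformly random pair $\{x,y\}$, the probability that the edge $\{x,y\}$ is missing — i.e.\ that $|\,\|x-y\|_2^2 - \mu\,| > n$ — is at most $2e^{-n/2C^4}$.

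Finally I would convert this probability statement into the counting statement. The number of unordered pairs is ${N \choose 2}$, and the expected number of missing pairs (over the choice of a uniformly random pair) equals $\frac{\#\{\text{missing pairs}\}}{{N \choose 2}}$; equivalently, summing the indicator over all pairs, $\#\{\text{missing pairs}\} = {N \choose 2} \cdot \Pr[\text{random pair is missing}] \le {N \choose 2}\, 2e^{-n/2C^4}$, which is exactly ${N \choose 2} - |E| \le {N \choose 2}\, 2e^{-n/2C^4}$. There is essentially no obstacle here — the only mild subtlety is the bookkeeping with ordered versus unordered pairs and the self-pairs $x = y$ (which contribute $\|x-x\|_2^2 = 0$, harmless), and matching the exact constant in the exponent, which I would handle simply by noting the stated bound is not tight and follows a fortiori from the Hoeffding estimate above.
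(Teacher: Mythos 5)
Your proposal is correct and matches the paper's proof essentially verbatim: both decompose $\|x-y\|_2^2$ as a sum of $n$ independent coordinate-wise contributions bounded in $[0,C^2]$, apply Hoeffding's inequality to concentrate around $\mu$, and convert the resulting tail probability into a count of missing edges. Your observation that the standard Hoeffding bound actually yields the stronger exponent $2e^{-2n/C^4}$, and that the stated $2e^{-n/2C^4}$ therefore follows a fortiori, is accurate and harmless.
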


\begin{proof}
The quantity $\|x - y\|_2^2 = \sum_{i = 1}^n (x_i - y_i)^2$ and hence
is the sum of independent random variables (when $x$ and $y$ are chosen
uniformly at random from $V$). Each variable is bounded in the range $[0,
C^2]$ and hence we can apply Hoeffding's Inequality 
to obtain 
$$Pr[|\|x - y \|_2^2 - \mu | > n] \leq 2e^{-n/2C^4}$$ and this implies
the Claim.
\end{proof}

As a first step, we will cover all the edges of $G$ by a linear number
of induced subgraphs of small (but super-constant) maximum degree. We
will then use this covering to obtain a covering via an almost linear
number of induced matchings. Next, we describe the (preliminary) induced
subgraphs that we use to cover $G$.

We will define one subgraph $G_z$ for each $z \in V$. Let $V_z$ (the
vertex set of $G_z$) 
be $$V_z = \Big \{ x \in V ~:~ |\| x - z\|_2^2 -
\mu/4 | \leq 3n/4 \Big \}.
$$ 
The subgraph $G_z$ is the induced graph on $V_z$. 
First, we prove that these subgraphs $G_z$ do indeed cover the edges of $G$:

\begin{lemma}
Let $n \geq 2C$. 
For all $(x, y) \in E$, there is a $z$ such that $x, y \in V_z$
\end{lemma}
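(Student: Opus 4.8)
The plan is to take $z$ to be a rounding of the midpoint $\tfrac{x+y}{2}$ to the integer lattice. Observe first that if the midpoint were already a lattice point, i.e.\ if $x_i+y_i$ were even in every coordinate, we could set $z_i=\tfrac{x_i+y_i}{2}$ and compute exactly
$$\|x-z\|_2^2 \;=\; \|y-z\|_2^2 \;=\; \sum_{i=1}^n\Big(\tfrac{x_i-y_i}{2}\Big)^2 \;=\; \tfrac14\|x-y\|_2^2.$$
Since $(x,y)\in E$ means $\big|\,\|x-y\|_2^2-\mu\,\big|\le n$, this gives $\big|\,\|x-z\|_2^2-\mu/4\,\big|\le n/4\le 3n/4$, and likewise for $y$, so $x,y\in V_z$. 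Hence all the work is in the coordinates where $x_i+y_i$ is odd and some rounding is forced.

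For such a coordinate we are free to round either way, so write $z_i=\tfrac{x_i+y_i+s_i}{2}$ with a sign $s_i\in\{-1,+1\}$ to be chosen, and $s_i=0$ for the even coordinates (so $z_i$ is always a lattice point, and since $x_i,y_i\in\{1,\dots,C\}$ the midpoint lies in $[1,C]$, whence $z_i\in\{1,\dots,C\}$ and $z\in V$). Expanding squared norms,
$$\|x-z\|_2^2 \;=\; \tfrac14\|x-y\|_2^2 \;-\; \tfrac12\sum_i (x_i-y_i)s_i \;+\; \tfrac14\sum_i s_i^2,\qquad
\|y-z\|_2^2 \;=\; \tfrac14\|x-y\|_2^2 \;+\; \tfrac12\sum_i (x_i-y_i)s_i \;+\; \tfrac14\sum_i s_i^2.$$
The quadratic term is at most $n/4$ since each $s_i^2\le 1$. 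The key point is to choose the signs to make $\big|\sum_i(x_i-y_i)s_i\big|$ small: each active $|x_i-y_i|$ is at most $C-1$, so by a standard greedy prefix-sum balancing argument — process the active coordinates in any order, each time picking $s_i$ so that $(x_i-y_i)s_i$ has sign opposite to the current partial sum — every partial sum, in particular the final one, stays bounded in absolute value by $\max_i|x_i-y_i|\le C-1$. Combining the three error contributions,
$$\Big|\,\|x-z\|_2^2-\tfrac{\mu}{4}\,\Big| \;\le\; \tfrac14\big|\,\|x-y\|_2^2-\mu\,\big| + \tfrac12(C-1) + \tfrac{n}{4} \;\le\; \tfrac{n}{4}+\tfrac{C-1}{2}+\tfrac{n}{4}\;\le\;\tfrac{3n}{4},$$
the last step using $n\ge 2C> 2(C-1)$; the identical bound holds with $y$ in place of $x$. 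Therefore $x,y\in V_z$, proving the lemma.

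I expect the only genuine obstacle to be the balancing step for the odd coordinates: one must notice that the rounding direction in each such coordinate is a free choice and that a greedy argument keeps the accumulated linear error at $O(C)=O(1)$ rather than letting it grow linearly in $n$. (The potentially worrying degenerate case — $z$ coinciding with, or very close to, $x$ — does not occur, since $(x,y)\in E$ forces $\|x-y\|_2^2\ge\mu-n$, which is large, so the midpoint is genuinely far from both endpoints; this is exactly what is captured by the term $\tfrac14|\,\|x-y\|_2^2-\mu\,|$ above.) Everything else is a routine expansion of Euclidean norms together with the defining inequality of $E$.
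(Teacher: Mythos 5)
Your proof is correct and takes essentially the same approach as the paper: write $z$ as the midpoint $\tfrac{x+y}{2}$ shifted by a half-integer vector $w$ (your $s/2$), expand $\|x-z\|_2^2$ into a main term $\tfrac14\|x-y\|_2^2$, a quadratic term $\|w\|_2^2 \le n/4$, and a cross term controlled by a greedy prefix-sum sign-balancing argument (the paper's Claim~2.3), then bound all three using $n\ge 2C$. Only cosmetic differences in notation separate the two arguments.
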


\begin{proof}
First we establish a simple 
Claim that will help us choose an appropriate $z$:

\begin{claim}~\label{claim:sign}
Let $a$ be a vector in which the absolute value of each entry is at most
$C$. Then there is a vector $w$ where each entry is $\pm 1/2$ such that $|
(a, w) | = | \sum_{i=1}^n a_i w_i | \leq C/2 \leq n/4$
\end{claim}

\begin{proof}
We can prove this by induction by considering the partial sum 
$  \sum_{i=1}^r a_i w_i $ which we will assume is at most $C/2$ 
in absolute value. We can choose $w_{r+1}$ 
so that $a_{r+1}w_{r+1}$ has 
the opposite sign of this partial sum and this implies that the 
partial sum $  \sum_{i=1}^{r+1} a_i w_i $  is also 
at most $C/2$ in absolute value (although the sign may have changed). 
This completes the proof of the claim. 
\end{proof}

Let $a$ be a vector defined as follows: 
if $y_i - x_i$ is even, set $a_i = 0$
and otherwise set $a_i = y_i - x_i$. We can apply  
Claim~\ref{claim:sign} to $a$,
but furthermore change the values of $w$ to be zero on indices on which
$a$ is zero. Then $|(a, w)|$ is still at most $C/2$, and $w_i$ is $\pm 1/2$
whenever $a_i$ is non-zero, and zero whenever $a$ is zero. Set $z =
\frac{y + x}{2} + w$. Note that $z \in V$ because whenever $\frac{y_i
+ x_i}{2}$ is not an integer, $a_i$ must be non-zero and hence $w_i$
is $\pm 1/2$ and alternatively 
whenever $\frac{y_i + x_i}{2}$ is an integer, $a_i$
is zero and hence $w_i$ is zero. Consider the quantity 
$$\|z - x\|_2^2 = \|\frac{y -
x}{2} + w\|_2^2 = \frac{1}{4} \|y - x \|_2^2 + (y - x, w) + \|w\|_2^2$$ 
Since
$x$ and $y$ are adjacent in $G$, we have that 
$|~\frac{1}{4} \|y - x \|_2^2 - \mu / 4 ~| \leq n/4$. 
Also, $\|w\|_2^2 \leq n/4$. Finally, $(y - x, w) = (a,
w)$ since $w$ is zero iff $a$ is zero. Hence 
$|~\|z - x\|_2^2-\mu/4~| \leq n/4+C/2+n/4 \leq 3n/4$, 
and
an identical argument holds for bounding $|~\|z - y \|_2^2-\mu/4~|$. 
Thus $x, y \in V_z$
and the edge $(x, y)$ is covered by some induced subgraph $G_z$.
\end{proof}

Next we establish that the maximum degree of any induced subgraph $G_z$
is not too large:

\begin{lemma}
For all $z \in V$, the maximum degree of $G_z$ is at most $(10.5)^n$. 
\end{lemma}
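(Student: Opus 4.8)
The plan is to bound the number of vertices $x \in V_z$ that are adjacent in $G$ to a fixed vertex $y \in V_z$. Fix $z \in V$ and a vertex $y$ with $|\,\|y-z\|_2^2 - \mu/4\,| \le 3n/4$, and consider the set of neighbors $x$ of $y$ that also lie in $V_z$. The key observation is that such an $x$ must satisfy two simultaneous distance constraints: $\|x-z\|_2^2$ is within $3n/4$ of $\mu/4$ (membership in $V_z$), and $\|x-y\|_2^2$ is within $n$ of $\mu$ (adjacency in $G$). I want to show that these two constraints, taken together, pin down $x$ to a set of size at most $(10.5)^n$, and since $y$ was an arbitrary vertex of $V_z$ this bounds the degree.

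The natural way to exploit the two constraints is to count coordinatewise. For each coordinate $i$, the pair $(x_i - z_i, x_i - y_i)$ ranges over at most $C^2$ values as $x_i$ ranges over $[C]$ — but in fact, since $z_i$ and $y_i$ are fixed, $x_i$ takes only $C$ values, and we should instead think of the contributions to the two squared-distance sums, namely $\sum_i (x_i - z_i)^2$ and $\sum_i (x_i - y_i)^2$. The idea is that the combined condition forces $\|x-z\|_2^2 + \|x-y\|_2^2$ to lie in a window of length $O(n)$ around a fixed value, and we can use this: write $\|x-z\|_2^2 + \|x-y\|_2^2 = 2\|x - \tfrac{y+z}{2}\|_2^2 + \tfrac12\|y-z\|_2^2$, so the sum being concentrated is equivalent to $\|x - m\|_2^2$ being concentrated around a fixed value, where $m = (y+z)/2$. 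Thus $x$ must lie in a thin spherical shell about the (possibly non-lattice) center $m$, of squared-radius roughly $\rho^2$ for an appropriate $\rho$, with the shell having width $O(n)$. The number of lattice points in $[C]^n$ in such a shell is then estimated by a large-deviation / entropy argument: the coordinates $(x_i - m_i)^2$ are (after normalizing) like bounded i.i.d. contributions summing to $n$ times a prescribed average, and the number of $x \in [C]^n$ achieving a sum in a given $O(n)$-window is at most $e^{H \cdot n}$ for the appropriate entropy-type exponent $H$, which a direct optimization shows is at most $\ln(10.5)$ for a suitable choice of the constant $C$.

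The main obstacle I expect is the numerics: one must choose $C$ and carry out the optimization of the counting exponent carefully enough to get the clean bound $(10.5)^n$ rather than some worse constant. Concretely, after reducing to "count $x \in [C]^n$ with $\sum_i (x_i - m_i)^2$ in a fixed interval of length $\Theta(n)$," I would bound this count by $\min_{\lambda \ge 0} e^{\lambda \cdot (\text{upper endpoint})} \prod_{i=1}^n \big(\max_{x_i \in [C]} e^{-\lambda (x_i - m_i)^2}\big)^{?}$ — more precisely by a Chernoff-type estimate $\prod_i \big(\sum_{x_i \in [C]} e^{-\lambda((x_i-m_i)^2 - \tau)}\big)$ optimized over $\lambda$, where $\tau$ is the per-coordinate target. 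Since each $m_i$ is a half-integer in $[1,C]$ and each $(x_i - m_i)^2$ is a bounded quantity, the per-coordinate factor is a fixed function of $\lambda$ and the position of $m_i$, uniformly bounded by a constant depending only on $C$; choosing $C$ appropriately (and $n \ge 2C$, as hypothesized, to absorb lower-order slack) makes that constant at most $10.5$. The geometric shell picture and the Chernoff bound are routine; getting the constant down to $10.5$ is where the real work lies, and it is essentially a one-variable calculus exercise once the setup is in place.
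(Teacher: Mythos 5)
Your overall plan (combine the two squared-distance constraints via a parallelogram-type identity, then count lattice points in the resulting region) is the right one, but you have chosen the wrong identity, and that choice breaks the argument.

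You reduce to the midpoint identity $\lVert x-z\rVert_2^2 + \lVert x-y\rVert_2^2 = 2\lVert x-m\rVert_2^2 + \tfrac12\lVert y-z\rVert_2^2$ with $m=(y+z)/2$. Plugging in the constraints $\lVert x-z\rVert^2\approx\mu/4$, $\lVert x-y\rVert^2\approx\mu$, $\lVert y-z\rVert^2\approx\mu/4$, the leading $\mu$-terms do \emph{not} cancel: you get $\lVert x-m\rVert_2^2 \approx \tfrac{9\mu}{16}$, with $\mu=\Theta(nC^2)$. So $x$ is pinned to a thin shell around $m$ whose \emph{squared radius} is $\Theta(nC^2)$ and whose width is $O(n)$. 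That is not a small set. For a sum of $n$ i.i.d.\ terms bounded in $[0,C^2]$ with mean $\Theta(C^2)$, a constant multiplicative deviation gives a Chernoff/large-deviation rate that is $\Theta(1)$, \emph{not} $\Theta(\log C)$, so the best you can hope to show is that the shell contains at most $(C e^{-O(1)})^n$ lattice points. For large $C$ this is far bigger than $(10.5)^n$, yet the theorem needs the lemma uniformly in $C$ (in fact $C$ must be taken huge to make $f = 1 + 2\ln(10.5)/\ln C + o(1)$ close to $1$). No choice of $\lambda$ in your Chernoff optimization can rescue this; the exponent genuinely does not beat $\ln C$.

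The paper avoids this by using a different parallelogram identity centered at the \emph{antipodal} point rather than the midpoint. Fixing $x\in V_z$ and letting $y$ range over its neighbors in $V_z$, it considers $x' = 2z - x$ and uses $\lVert x-y\rVert_2^2 + \lVert y-x'\rVert_2^2 = 2\lVert x-z\rVert_2^2 + 2\lVert y-z\rVert_2^2$. Here the constraints combine as $2(\mu/4) + 2(\mu/4) - \mu = 0$: the $\mu$-order terms cancel exactly, leaving $\lVert y-x'\rVert_2^2 \le 4n$. That puts $y$ in a genuinely small ball of radius $2\sqrt{n}$ (independent of $C$) around the lattice point $x'$, and the lattice-point count is then a trivial volume bound: the disjoint unit cubes centered at those lattice points fit inside a ball of radius $2.5\sqrt{n}$, whose volume is $(2.5\sqrt{2\pi e})^n < 10.5^n$. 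If you replace your midpoint $m=(y+z)/2$ by the reflected point $y'=2z-y$ (keeping $y$ fixed and $x$ as the variable, by symmetry), the cancellation appears and your plan goes through without any Chernoff argument at all.
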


\begin{proof}
Let $x \in V_z$ and consider any neighbor $y$ of $x$  that satisfies
$y \in V_z$. We first establish that $x$ and $y$ are close 
to being antipodal
in the ball centered at $z$,
and hence we can bound the number of neighbors of $x$ in $V_z$ by bounding
the number of points of $V$ in some small 
spherical cap around the antipodal
point to $x$.

Define the antipodal point $x' = 2z - x$, this is 
the antipodal to $x$ with respect to the ball centered at $z$.

Consider the parallelogram $(x, z, y, x+y-z)$. By the Parallelogram
Law, the sum of the squares of the four side lengths equals the
sum of the squares of the lengths of the two diagonals. Therefore
we obtain 
$$\|x - y \|_2^2 + \|x + y - 2z \|_2^2 = 2 \|x - z\|_2^2 +
2 \|y - z \|_2^2$$ Using the definition of $x'$, this gives $$\|x -
y \|_2^2 + \|y - x' \|_2^2 = 2 \|x - z\|_2^2 + 2 \|y - z \|_2^2$$

Hence $\|y - x'\|_2^2 = 2 \|x - z\|_2^2 + 2 \|y - z \|_2^2 - \|x - y
\|_2^2$ and as both $\|x - z\|_2^2$ and $\|y - z \|_2^2$ are approximately
$\mu/4$ (since $x, y \in V_z$) and $\|x-y\|_2^2$ is approximately $\mu$
because $x$ is adjacent to $y$ in $G$, this implies that 
$\|y - x'\|_2^2 \leq 4n$.
Therefore we can bound the degree
of $x$ in $G_z$ by the number
lattice points in a ball of radius $2 \sqrt n$ (centered at the lattice 
point $x'$.)  The unit $n$-dimensional cubes centered  at these 
lattice points are pairwise disjoint, each has volume $1$, and they are
all contained in a ball of radius $2 \sqrt n+0.5 \sqrt n=2.5 \sqrt n$.
Therefore, the number of these points does not exceed the volume of an
$n$ dimensional ball of radius $2.5 \sqrt n$. Since $n$ is even, the 
volume of this ball is 
$$
\frac{\pi^{n/2} (2.5 \sqrt n)^n}{(n/2)!} 
< (2 \pi e)^{n/2} \frac{(2.5 \sqrt n)^n}{n^{n/2}} =(2.5 \cdot 
\sqrt {2 \pi e })^n < 10.5 ^n,
$$
where here we have used the fact that $b! > (b/e)^b$ for any positive 
integer $b$. This completes the proof of the lemma.
\end{proof}

\begin{lemma}
Let $H$ be a graph with maximum degree $d$. 
Then $H$ can be covered by $O(d^2)$ induced matchings.
\end{lemma}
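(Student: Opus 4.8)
The plan is to proceed in two stages: first decompose $E(H)$ into a small number of ordinary matchings, and then refine each of these into a bounded number of \emph{induced} matchings. For the first stage I would simply observe that every edge of $H$ shares an endpoint with at most $2(d-1)$ other edges, so a greedy proper edge-colouring of $H$ uses at most $2d-1$ colours; this writes $E(H)$ as a disjoint union of matchings $M_1,\dots,M_{2d-1}$. (Vizing's theorem would give the sharper bound $d+1$, but the crude estimate is all that is needed here.)

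For the second stage, fix one of these matchings $M = M_i$ and introduce its \emph{conflict graph} $\Gamma$, whose vertex set is $M$ itself and in which two edges $e,f \in M$ are joined whenever some endpoint of $e$ is adjacent in $H$ to some endpoint of $f$. The key observation is that $\Gamma$ has maximum degree at most $2(d-1)$: if $e=\{a,b\}\in M$, then $a$ has at most $d-1$ neighbours in $H$ besides $b$, and since $M$ is a matching each such neighbour lies on at most one edge of $M$; the same holds for $b$, so $e$ is in conflict with at most $2(d-1)$ edges of $M$. Hence $\Gamma$ is properly vertex-colourable with at most $2d-1$ colours, again by a greedy argument.

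Finally I would note that every colour class $S$ in this colouring of $\Gamma$ is precisely an induced matching of $H$: the edges in $S$ are pairwise vertex-disjoint because $S \subseteq M$, and no edge of $H$ joins endpoints of two distinct edges of $S$ because $S$ is an independent set in $\Gamma$; together these two facts say exactly that $H$ restricted to the union of the edges of $S$ is a perfect matching. Carrying this out for each of $M_1,\dots,M_{2d-1}$ produces at most $(2d-1)^2 = O(d^2)$ induced matchings whose union is $E(H)$, which proves the lemma. I do not anticipate a genuine obstacle; the only point requiring a little care is the maximum-degree bound on the conflict graph (together with the harmless bookkeeping remark that empty colour classes may be discarded), and it is this bound that drives the whole count.
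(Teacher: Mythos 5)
Your proof is correct, but it is organized differently from the paper's. The paper runs a \emph{single} greedy pass: it declares two edges of $H$ to be in conflict if they share an endpoint \emph{or} some edge of $H$ joins an endpoint of one to an endpoint of the other, notes that each edge conflicts with fewer than $2d-2+(2d-2)(d-1)<2d^2$ others, and greedily places each edge into the first of $2d^2$ bins where it causes no conflict. You instead factor the same conflict relation into its two components and handle them in separate stages: a greedy proper edge-colouring with $2d-1$ colours to kill the ``share an endpoint'' conflicts, followed by a greedy vertex-colouring of each matching's conflict graph (max degree $\le 2(d-1)$) with $2d-1$ colours to kill the ``joined by an edge'' conflicts. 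Both are elementary greedy colouring arguments and both give $O(d^2)$; the paper's one-shot version has a marginally cleaner constant ($<2d^2$ vs.\ your $(2d-1)^2$, which improves to roughly $2d^2$ if you invoke Vizing in stage one as you note), while your two-stage version isolates the two sources of conflict and makes the ``matching'' and ``induced'' requirements visibly orthogonal. Your verification that each colour class of $\Gamma$ is an induced matching is also complete and correct.
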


\begin{proof}
Call two edges $e_1, e_2$ of $H$ in conflict if they either 
share a common end  or
there is an
edge in $H$ connecting an endpoint of $e_1$ to an endpoint of $e_2$.
It is clear that any edge $e$ of $H$ 
can be in 
conflict with at most $2d-2+(2d-2)(d-1)<2d^2$ other edges of $H$.

Thus we can initialize each member of a set of $2d^2$ 
induced matchings $M_i$ to be the empty set, and for each edge
$e$ of $H$ in its turn, add $e$ to the first $M_i$ for which $e$ is not
in conflict with any edge currently in $M_i$. Since $e$ is in conflict
with less than $2d^2$ edges, it can added to some $M_i$. We can
continue this procedure, obtaining a set of less than $2d^2$ induced 
matchings covering all  edges of $H$.
\end{proof}

It follows that we can decompose the edges of each induced 
subgraph $G_z$ into at most $O(d^2) \leq O((10.5)^{2n})$ 
induced matchings,
and this yields a decomposition of $G$ 
into $O(N d^2)$ induced matchings. 
These matchings can additionally
be made edge-disjoint, since, if any edge is multiply covered 
we can remove it from all but one of the
induced matchings (and the result is still an induced matching). 
We have thus proved the following.

\begin{theorem}
For every $n, C$ with $n \geq 2C$, $n$ even, there is a graph $G$ on 
$N = C^n$ vertices that is missing at most $ N^g$ edges, 
for $$g = 2 - \frac{ 1 }{2C^4 \ln C} + o(1)$$ and 
can be covered by $N^{f}$ disjoint induced matchings, 
where $$f =1 + \frac{2 \ln 10.5}{\ln C} + o(1)$$. 
\end{theorem}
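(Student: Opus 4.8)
The plan is simply to assemble the ingredients built up above; no genuinely new idea is needed, only a careful translation between quantities that are exponential in $n$ and powers of $N = C^n$ (recall $n = \ln N/\ln C$).

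\emph{Bounding the missing edges.} First I would invoke the Claim: the number of non-edges of $G$ is at most $2{N \choose 2}e^{-n/2C^4} < N^2 e^{-n/2C^4}$. Substituting $n = \ln N/\ln C$ gives $e^{-n/2C^4} = e^{-\ln N/(2C^4\ln C)} = N^{-1/(2C^4\ln C)}$, so the number of missing edges is less than $N^{\,2 - 1/(2C^4\ln C)} \le N^g$, the $o(1)$ in the definition of $g$ comfortably absorbing the leftover $2{N\choose 2}$ versus $N^2$ discrepancy.

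\emph{Covering by induced matchings.} Next I would chain the three lemmas. The first lemma — whose proof needs exactly the hypothesis $n \ge 2C$, since Claim~\ref{claim:sign} is applied with $C/2 \le n/4$ — shows that the $N$ induced subgraphs $\{G_z : z \in V\}$ cover every edge of $G$. The second lemma bounds the maximum degree of each $G_z$ by $d := (10.5)^n$. The third lemma then decomposes each $G_z$ into fewer than $2d^2$ induced matchings. Taking the union of these families over all $z \in V$ covers $E(G)$ by fewer than $2Nd^2$ induced matchings, and since $d^2 = (10.5)^{2n} = N^{2\ln 10.5/\ln C}$ this is at most $2N^{\,1 + 2\ln 10.5/\ln C} \le N^f$, the $o(1)$ in $f$ absorbing the factor $2$ (and the constant hidden in the $O(d^2)$ bound of the third lemma). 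Finally, if any edge of $G$ lies in more than one matching on this list, delete it from all but one; removing edges from an induced matching leaves an induced matching and does not increase the count, so we still have at most $N^f$ matchings and they are now pairwise edge-disjoint.

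\emph{Where the care lies.} There is no real obstacle, only two points to watch: (i) the hypothesis $n \ge 2C$ is precisely what makes the covering lemma work (through Claim~\ref{claim:sign}), so it cannot be weakened for free; and (ii) every constant factor arising along the way — the $2$ from Hoeffding, the constant in the $O(d^2)$ matching decomposition, and the gap between ${N \choose 2}$ and $N^2$ — is polynomially negligible in $N$ and is swallowed by the $o(1)$ corrections to the exponents $g$ and $f$. Note also that the theorem is stated for $G$ itself, with matchings of possibly different sizes; converting $G$ into a true $(r,t)$-RS graph in which all matchings share a common size $r$ is the "slight modification" promised before the construction and is handled separately.
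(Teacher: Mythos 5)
Your proof is correct and follows the paper's own route exactly: Hoeffding for the missing-edge count, the cover by the $N$ induced subgraphs $G_z$, the $(10.5)^n$ degree bound, the $O(d^2)$ induced-matching decomposition of each $G_z$, and the final deletion step to make the matchings pairwise disjoint. The exponent translations $e^{-n/2C^4}=N^{-1/(2C^4\ln C)}$ and $(10.5)^{2n}=N^{2\ln 10.5/\ln C}$ are handled correctly, and you rightly note that the passage to a genuine $(r,t)$-RS graph with equal-size matchings is a separate remark following the theorem.
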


Hence, for any $\epsilon>0$ we can construct a graph $G$ on $N$ vertices 
missing 
at most $N^{2 - \delta}$ edges for  $\delta = \delta(\epsilon)=
e^{-O(1/\epsilon)}$,
that can be covered by $N^{1+\epsilon}$ 
pairwise disjoint induced matchings. 
Note that the number of matchings is nearly linear.
Note also that by splitting each of these matchings $M$ into
$\lfloor |M|/r \rfloor$ pairwise 
disjoint matchings, each of size exactly $r=N^{1-\epsilon-\delta}$,
omitting the remaining $|M|-r \lfloor |M|/r \rfloor <r$ edges, we
get an $(r,t)$-RS graph, where $r=N^{1-\epsilon-\delta}$ and
the number of missing edges  is at most $2N^{2-\delta}$. As $\epsilon $
(and hence $\delta<\epsilon$) can be chosen to be 
arbitrarily small, this gives, with the right choice of parameters,
an $(r,t)$-RS graph on $N$ vertices, with $r=N^{1-o(1)}$ and
$rt={N \choose 2} -o(N^2)$. 

\subsection{A Construction Using Error Correcting Codes}
\label{sec:ecc}

Here we construct nearly-complete graphs with large
induced matchings using error correcting codes. 
These constructions will be incomparable to those
in the previous section - the number of missing edges 
will be much smaller (in fact, the number of missing
edges can be made asymptotically optimal as 
we will demonstrate in Section~\ref{sec:recon}), but the price  we
pay is that 
the average size of an induced matching will only be 
a small power of $N$ as opposed to nearly-linear. 
As we will show, 
the construction in this section will be 
better tailored to the application in \cite{BLM} (at least for some
values of the relevant parameters) than the construction
of the
previous section. 

Throughout this section,
we will use codes over the binary alphabet as well as over a bigger
alphabet. Let $d_H(x, y)$ be the Hamming
distance between two binary strings $x$ and $y$ (of the same length).  The
Hamming weight of a binary string $x$ is the number of non-zero entries
- or equivalently the Hamming distance to the all zeros vector. We can
similarly define the Hamming distance $d_H(x,y)$ 
between two vectors $x$ and $y$ over a larger
alphabet $[C]$ as the number of indices where these vectors disagree.

\begin{definition}
A $[n, k, d]$ linear 
code $\calC$ is a subspace consisting of $2^k$ length $n$ binary vectors
such that for all $x, y \in \calC$ and $x \neq y$, $d_H(x, y) \geq
d$. We will call $n$ the encoding length, $k$ the dimension, and $d$
the distance of the code.
\end{definition}

An $n \times k$ matrix $A$ of full column rank 
over $GF(2)$ is the generating matrix 
of a code of dimension $k$ and
length $n$ consisting of all linear combinations of its columns.
The distance of this code is exactly the minimum Hamming
weight of any non-zero code word. Throughout this section we will make
use of a particular type of code:

\begin{definition}
Call a linear code $\calC$ 
proper if the all ones vector is a codeword. 
\end{definition}

It is well-known that there are linear codes that achieve the
Gilbert-Varshamov Bound. In fact, proper codes also achieve
this bound:

\begin{lemma}
If $ \sum_{i = 0}^d {n  \choose i} < 2^{n-k}$, 
then there is a proper $[n, k, d]$ code.  Thus, there is such a code
in which $k=(1-H(d/n))n$, where $H(x)=-x \log_2 x -(1-x) \log_2 (1-x)$
is the binary entropy function.
\end{lemma}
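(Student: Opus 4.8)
The plan is to prove this by a greedy/counting argument, modifying the standard proof that random linear codes meet the Gilbert--Varshamov bound so that the all-ones vector is forced to be a codeword. First I would observe that a proper linear code of dimension $k$ containing the all-ones vector $\mathbf{1}$ is the same thing as a linear code of dimension $k-1$ together with the coset structure: precisely, if $\calC'$ is any linear code of dimension $k-1$ that does not contain $\mathbf{1}$, then $\calC = \calC' \cup (\mathbf{1} + \calC')$ is a linear code of dimension $k$ with $\mathbf{1} \in \calC$. So it suffices to build a $(k-1)$-dimensional code $\calC'$ with $\mathbf{1} \notin \calC'$ such that every nonzero vector of the form $c$ or $\mathbf{1}+c$ with $c \in \calC'$ has Hamming weight at least $d$; equivalently, every nonzero codeword of $\calC$ has weight $\ge d$, which is exactly the distance condition since $\calC$ is linear.

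The construction is greedy on a basis. I would pick basis vectors $v_1, \dots, v_{k-1}$ one at a time, maintaining the invariant that the partial code $\calC_j = \spn\{v_1,\dots,v_j\}$ has minimum nonzero weight at least $d$ \emph{and} that $\mathbf{1} + c$ has weight at least $d$ for every $c \in \calC_j$ (including $c = \mathbf 0$, which needs $\mathrm{wt}(\mathbf 1) = n \ge d$, true since $\sum_{i=0}^d \binom{n}{i} < 2^{n-k} \le 2^n$ forces $d \le n$; in fact $d<n$ comfortably). To add $v_{j+1}$, I must avoid all vectors $v$ such that for some $c \in \calC_j$ either $\mathrm{wt}(v + c) < d$ or $\mathrm{wt}(\mathbf 1 + v + c) < d$. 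The number of forbidden $v$ is at most $|\calC_j| \cdot \sum_{i=0}^{d-1}\binom{n}{i} + |\calC_j| \cdot \sum_{i=0}^{d-1}\binom{n}{i} = 2 \cdot 2^{j} \sum_{i=0}^{d-1}\binom{n}{i}$. Using $\sum_{i=0}^{d-1}\binom{n}{i} \le \sum_{i=0}^{d}\binom{n}{i} < 2^{n-k}$ and $j \le k-2$, this is at most $2 \cdot 2^{k-2} \cdot 2^{n-k} = 2^{n-1} < 2^n$, so a valid (and automatically nonzero, and linearly independent — since any $v$ in $\spn \calC_j$ other than producing a repeat would already have been excluded via $\mathrm{wt}(v+c)=0<d$ for the matching $c$) choice of $v_{j+1}$ exists. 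Iterating gives $\calC_{k-1}$ with the desired properties, and then $\calC = \calC_{k-1} \cup (\mathbf 1 + \calC_{k-1})$ is the required proper $[n,k,d]$ code.

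The main obstacle — really just a bookkeeping point rather than a deep one — is making sure the all-ones vector stays \emph{outside} the partial code throughout, and simultaneously that the distance bound controls the cosets $\mathbf 1 + \calC_j$, not merely $\calC_j$ itself; this is why the count of forbidden vectors carries the factor $2$, and why one needs the hypothesis with a genuine inequality (rather than $\le$) to absorb it. One should also double-check the edge behaviour when $d-1$ versus $d$ appears in the sum, but since $\sum_{i=0}^{d-1}\binom ni \le \sum_{i=0}^{d}\binom ni$ this only helps. For the final sentence, I would invoke the standard entropy estimate $\sum_{i=0}^{d} \binom{n}{i} \le 2^{H(d/n)n}$ (valid for $d/n \le 1/2$): choosing $k = \lceil (1 - H(d/n))n \rceil$ — or just $k = (1-H(d/n))n$ up to the usual additive rounding, which is what the statement means asymptotically — makes $2^{n-k} = 2^{H(d/n)n} \ge \sum_{i=0}^d \binom ni$, so the hypothesis is (essentially) met and a proper code of this dimension exists.
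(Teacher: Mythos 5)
Your proposal is correct, but it uses a genuinely different construction than the paper. The paper argues via a random parity-check matrix: it draws the first $n-1$ columns of an $(n-k)\times n$ matrix $B$ uniformly at random and sets the last column to be the XOR of the others, which forces $B\vec{1}=\vec{0}$ and hence $\vec{1}\in\calC=\ker B$; a union bound over sets of at most $d$ columns (each summing to zero with probability exactly $2^{-(n-k)}$, including sets containing the dependent last column by complementation) shows that with positive probability no such set vanishes, so the distance exceeds $d$; a brief patch at the end fixes the dimension by adding further linearly independent parity constraints if the $n-k$ rows happen to be dependent. Your proof instead builds a generating basis greedily: you grow a $(k-1)$-dimensional subcode $\calC'$ avoiding, at each step, the (at most $2\cdot 2^{j}\sum_{i<d}\binom{n}{i} < 2^n$) vectors that would create a short codeword either in $\calC'$ or in the coset $\mathbf 1+\calC'$, and then adjoin $\mathbf 1$ as the final generator. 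Both arguments use the hypothesis in essentially the same way and land on the same bound; the tradeoffs are that your greedy version handles the dimension bookkeeping automatically (a bad $v$ already inside $\spn\calC_j$ is excluded by the weight-$0$ case, so independence comes for free) and keeps the all-ones constraint visible throughout, whereas the paper's parity-check trick is shorter and closer to the textbook Gilbert--Varshamov argument but needs the dimension patch. Your treatment of the final "Thus" clause via $\sum_{i\le d}\binom{n}{i}\le 2^{H(d/n)n}$ matches what the paper leaves implicit.
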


\begin{proof}
We can define a length $n$ code by choosing an $(n - k) \times n$ parity
check matrix as follows: for each of the first $n-1$ columns, choose
each vector uniformly at random. Choose the last column to be the parity
of the preceding $n-1$ columns. Let the matrix be $B$. Then the code
$\calC$ is defined as $\calC= \{x \in \{0, 1\}^n | B x = \vec{0}\}$. Clearly
$\vec{1} \in \calC$ by construction. Since this code is linear, the
minimum distance is exactly the minimum Hamming weight of any non-zero
codeword. This quantity is exactly the smallest number of columns of $B$
that sum to the all zeros vector.

\begin{claim}
For any fixed set $S$ of columns of $B$, the probability that the sum is the all zeros vector is exactly $2^{-(n-k)}$
\end{claim}

\begin{proof}
If this set $S$ does not contain the last column, then the sum of 
the columns is distributed uniformly on $\{0, 1\}^{n-k}$. 
If the set $S$ does contain the last column, then the sum of 
the columns is exactly the sum of the columns not in the 
set - i.e. $[n] - S$ - and hence is also distributed 
uniformly on $\{0, 1\}^{n-k}$.
\end{proof}

So the probability that any set of at most $d$ columns sums to the 
all zero vector is at most $2^{-(n-k)} \sum_{i = 0}^d {n  \choose i}$, 
and if $ \sum_{i = 0}^d {n  \choose i} < 2^{n-k}$ there is a 
parity check matrix $B$ so that the code $\calC$ has distance 
at least $d+1>d$. This code has dimension at least $k$ because 
there are $n-k$ constraints imposed by the parity check matrix. 
If these constraints are linearly independent, then the code has 
dimension exactly $k$. If these constraints are not linearly 
independent, we can add additional constraints until the code has 
dimension exactly $k$ and the distance of the 
code cannot decrease as we add these constraints. 
\end{proof}

\begin{claim}
Let $A$ be the generating matrix for a proper $[n, k, d]$ code $\calC$
with $d>1$. 
Then deleting any row of $A$ results in a 
generating matrix $A'$ for a proper $[n-1, k, d-1]$ code. 
\end{claim}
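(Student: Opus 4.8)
The plan is to verify the three defining properties of a proper $[n-1,k,d-1]$ code directly from the structure of $A$. Recall that $A$ is an $n\times k$ full-column-rank matrix over $GF(2)$ whose column space is $\calC$, and properness means that some linear combination of the columns equals the all-ones vector $\vec{1}_n\in GF(2)^n$; equivalently, the coordinate vector of $\vec 1_n$ in terms of the columns is some fixed $v\in GF(2)^k$ with $Av=\vec 1_n$. Let $A'$ be $A$ with a chosen row (say row $j$) deleted, so $A'$ is $(n-1)\times k$. The codewords of the code generated by $A'$ are exactly $\{A'u : u\in GF(2)^k\}$, i.e., the codewords of $\calC$ with coordinate $j$ deleted.

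First I would check the encoding length and dimension: the length is visibly $n-1$, and I claim $A'$ still has full column rank $k$. This is where a small argument is needed, and it is the one place the hypothesis $d>1$ enters. If $A'u=\vec 0$ for some $u\neq \vec 0$, then $Au$ is a codeword supported only on coordinate $j$, hence $Au$ has Hamming weight at most $1$; since $u\neq\vec0$ and $A$ has full column rank, $Au$ is a nonzero codeword, so its weight is at least $d\geq 2$, a contradiction. Hence $A'$ has full column rank, the map $u\mapsto A'u$ is injective, and the code generated by $A'$ has exactly $2^k$ codewords, i.e., dimension $k$.

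Next I would check the distance. Take any nonzero codeword $A'u$ of the new code ($u\neq\vec0$). Then $Au$ is a nonzero codeword of $\calC$, so $\mathrm{wt}(Au)\geq d$. Deleting one coordinate can decrease the Hamming weight by at most $1$, so $\mathrm{wt}(A'u)\geq d-1$. Since the code is linear, this bound on the weight of every nonzero codeword is exactly a bound on the minimum pairwise Hamming distance, so the new code has distance at least $d-1$; thus it is an $[n-1,k,d-1]$ code (or better, but the claim only asserts "$\geq d-1$" implicitly through the bracket notation, matching the convention used earlier in the excerpt for the Gilbert--Varshamov construction). Finally, properness of $A'$ is immediate: with $v$ as above, $A v=\vec 1_n$, so $A'v$ is $\vec1_n$ with coordinate $j$ removed, which is $\vec 1_{n-1}$, the all-ones vector of length $n-1$; hence $\vec1_{n-1}$ lies in the column space of $A'$ and the new code is proper.

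I do not expect any real obstacle here — the only subtlety is the full-column-rank claim, which is precisely why $d>1$ is assumed (if $d=1$ one could delete a coordinate on which some weight-one codeword is supported and collapse two codewords together, destroying injectivity and the dimension count). Everything else is a one-line consequence of the facts that deleting a coordinate is a linear projection, that it changes Hamming weight by at most one, and that for linear codes minimum distance equals minimum nonzero weight.
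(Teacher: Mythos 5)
Your proof is correct and follows essentially the same route as the paper's: dimension is preserved because $d>1$ rules out a nonzero $u$ with $A'u=\vec 0$, properness survives because deleting a coordinate from $\vec 1_n$ gives $\vec 1_{n-1}$, and the distance drops by at most one since deleting a coordinate lowers Hamming weight by at most one. You simply spell out the full-column-rank step in more detail than the paper does; no substantive difference.
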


\begin{proof}
Note that $\calC' = \{A'x | x\in \{0, 1\}^k\}$ and 
hence $\calC'$ (defined by the generating matrix $A'$) has 
dimension $k$, as no nontrivial linear combination of the columns of
$A'$ can be the zero vector, by the assumption $d>1$. 
The all ones vector is still a codeword since 
$Ax = \vec{1}$ implies that $A'x = \vec{1}$. Finally, the 
minimum distance of $\calC'$ is the minimum Hamming weight of any 
non-zero code word, and the Hamming weight of any 
codeword in $\calC$ decreases by at most one by deleting any index. 
\end{proof}

Throughout this section let $\calC = \calC_n$ be an $[n, k, d]$ code, 
and let $\calC_{n-1}, \calC_{n-2}, ... \calC_{n-d+1}$ be proper 
$[n-1, k, d-1]$, $[n-2, k, d-2]$, ... and $[n-d+1, k, 1]$ 
codes, respectively. 

Next, we define a graph $G = (V, E)$  that will be the focus of 
this section. Let $V = [C]^n$ and set $N = |V| = C^n$. 
Consider two vertices $a, b \in V$, 
where $a = (a_1, a_2, ... a_n)$ and $b = (b_1, b_2, ... b_n)$ 
for $a_i, b_i \in [C]$. There is an edge between $a$ and $b$ if and 
only if $d_H(a, b) = \sum_{i = 1}^n 1_{a_i \neq b_i} > n-d$. 

It is easy to count the number of missing edges. Indeed, in the complement
of $G$ each vertex $a$ is connected to all vertices $b$ so that
$a_i=b_i$ for at least $d$ indices $i$. As the number of missing
edges is half the sum of degrees in the complement this gives:

\begin{claim}
$${N \choose 2} - |E| \leq \frac{1}{2}C^N \sum_{i = d}^n 
{n \choose i} (C-1)^{n-i}. $$
\end{claim}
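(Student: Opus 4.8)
The plan is to count missing edges by double-counting: the number of non-edges of $G$ is exactly half the sum of degrees in the complement graph $\overline{G}$, since every non-edge is incident to two vertices. So first I would fix a vertex $a \in V$ and count its neighbors in $\overline{G}$, i.e. the vertices $b$ with $d_H(a,b) \le n-d$, equivalently the vertices $b$ that agree with $a$ in at least $d$ coordinates.

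Next I would count these directly. For $b$ to agree with $a$ in exactly $j$ coordinates, one chooses the set of $j$ agreeing coordinates in $\binom{n}{j}$ ways, and on each of the remaining $n-j$ coordinates $b_i$ may be any of the $C-1$ values different from $a_i$, giving $(C-1)^{n-j}$ choices. Summing over $j$ from $d$ to $n$, the degree of $a$ in $\overline{G}$ is $\sum_{j=d}^n \binom{n}{j}(C-1)^{n-j}$, which is independent of $a$. (One may include or exclude the $b=a$ term harmlessly, since we only want an upper bound.)

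Finally I would assemble the pieces: summing over all $N = C^n$ vertices $a$ and dividing by $2$ gives
$$
{N \choose 2} - |E| = \frac{1}{2} \, C^n \sum_{i=d}^n {n \choose i}(C-1)^{n-i},
$$
which is the claimed bound (with $C^n$ in place of the $C^N$ appearing in the displayed statement — evidently a typo for $C^n = N$). Strictly this is an equality, so the stated inequality follows a fortiori; the only reason to phrase it as $\le$ is the harmless overcounting of whether $a$ itself is counted as its own "neighbor."

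There is essentially no obstacle here: the statement follows from a one-line double-counting argument plus the elementary combinatorial count of strings at bounded Hamming distance. The only point requiring the slightest care is making sure the degree count in $\overline{G}$ is uniform over all vertices (it is, by symmetry of the alphabet) so that the handshake lemma applies cleanly, and noting that the range of agreement counts $d \le j \le n$ in the complement corresponds exactly to the range $0 \le d_H(a,b) \le n-d$ defining the non-edges.
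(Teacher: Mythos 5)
Your proof is correct and is precisely the argument the paper has in mind: the sentence immediately preceding the Claim in the paper states exactly this double-counting via degrees in the complement, and you fill in the elementary count of strings agreeing with $a$ in at least $d$ coordinates and correctly identify $C^N$ as a typo for $C^n = N$. One tiny wobble: your displayed formula is not an equality (the sum includes the $j=n$ term counting $b=a$, so the right side is larger by $N/2$), but you immediately note this overcounting yourself, and it is exactly why the Claim is stated with $\le$.
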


\begin{lemma}
If $\frac{d}{n} \geq \frac{2}{C-1}$ then 
$$\frac{1}{2} C^n \sum_{i = d}^n {n \choose i} (C-1)^{n-i} \leq  
{n \choose d} C^n (C-1)^{n - d}.$$
\end{lemma}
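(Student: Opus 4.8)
The goal is to bound the tail sum $\sum_{i=d}^n \binom{n}{i}(C-1)^{n-i}$ by (twice) its largest term, namely the $i=d$ term $\binom{n}{d}(C-1)^{n-d}$. The natural approach is to show that, under the hypothesis $d/n \ge 2/(C-1)$, consecutive terms of the sum decrease geometrically with ratio at most $1/2$, so the whole sum is at most $2\binom{n}{d}(C-1)^{n-d}$, which is exactly the claimed bound (absorbing the factor $\tfrac12 C^n$ against the $2$).

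Concretely, I would write the $i$th term as $T_i = \binom{n}{i}(C-1)^{n-i}$ and compute the ratio of successive terms:
\[
\frac{T_{i+1}}{T_i} = \frac{\binom{n}{i+1}}{\binom{n}{i}} \cdot \frac{1}{C-1} = \frac{n-i}{i+1}\cdot\frac{1}{C-1}.
\]
For $i \ge d$ this ratio is at most $\frac{n-d}{(d+1)(C-1)} < \frac{n}{d(C-1)}$, and the hypothesis $d/n \ge 2/(C-1)$, i.e. $d(C-1) \ge 2n$, gives $\frac{n}{d(C-1)} \le \frac12$. Hence $T_{i+1} \le \frac12 T_i$ for every $i \ge d$, so $\sum_{i=d}^n T_i \le T_d \sum_{j\ge 0} 2^{-j} = 2 T_d = 2\binom{n}{d}(C-1)^{n-d}$. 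Multiplying by $\tfrac12 C^n$ yields the statement.

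There is essentially no obstacle here — it is a routine geometric-series domination argument. The only mild point of care is checking the ratio bound at the correct endpoint: one must use $i \ge d$ (not a general $i$) so that $(n-i)/(i+1)$ is maximized at $i=d$, and one should make sure the inequality $\frac{n-d}{(d+1)(C-1)} \le \frac12$ indeed follows from $d(C-1)\ge 2n$ (it does, since $n-d < n \le \tfrac12 d(C-1) < \tfrac12 (d+1)(C-1)$). I would present the proof in three short lines: the ratio formula, the bound $T_{i+1}\le \tfrac12 T_i$ for $i\ge d$ using the hypothesis, and the resulting sum of the geometric series.
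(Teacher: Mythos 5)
Your proof is correct and is essentially the same geometric-series domination argument the paper uses: the paper invokes the inequality $\binom{n}{i}\le (n/d)^{i-d}\binom{n}{d}$ to reduce the sum to $\binom{n}{d}(C-1)^{n-d}\sum_{j\ge 0}\bigl(n/(d(C-1))\bigr)^j \le 2\binom{n}{d}(C-1)^{n-d}$, which is exactly your bound obtained via the consecutive-term ratio.
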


\begin{proof}
Using the inequality 
${n \choose i} \leq (n/d)^{i - d} {n \choose d}$ we obtain a bound 
$$\sum_{i = d}^n {n \choose i} (C-1) ^{n-i} 
\leq {n \choose d} (C-1)^{n-d} \sum_{j = 0}^{n-d} (n/d)^j(C-1)^{-j }$$
which implies the Lemma. 
\end{proof}

Hence the number of missing edges in $G$ is at  most $ N^{e}$ 
for $e = 1 + \frac{ H(d/n) + (1- d/n)\log_2 (C-1) }{\log_2 C} + o(1)$. 
\vspace{0.3cm}

Next, we describe the induced matchings that are used to cover the 
edges in $G$. In order to do so, we will define an equivalence 
relation over edges of $G$. In particular, this will be an 
equivalence relation over ordered pairs $(a, b)$, 
where $a = (a_1, a_2, ... a_n)$ and $b=(b_1, b_2, ... b_n)$, 
under the condition that $d_H(a, b) > n-d$. 

\begin{definition}
Let $S \subset [n]$, $|S| = r$ and let $(a, b)$ be a pair of 
vertices in $V$ where $S=\{i | a_i=b_i\}$.
Let $x$ be a $\{0, 1\}^{n-r}$ vector. Let 
$[n] - S = \{i_1, i_2, ... i_{n-r}\}$ and $i_1 < i_2, ... < i_{n-r}$. 
Then the $x$-flip of $(a, b)$ is a pair $(c, d)$ such that 
for all $i \in S$, $c_i = a_i= b_i= d_i$ and for all 
$i = i_j \notin S$ (i.e. $i$ is the $j^{th}$ smallest 
index not in $S$), $c_i = a_i, d_i = b_i$ 
if $x_j = 0$ and otherwise $c_i = b_i, d_i = a_i$. 
\end{definition}

Informally, the $n-r$ indices not in $S$ are mapped in order to 
the $n-r$ bits in $x$ and the corresponding locations in $a$ 
and $b$ are swapped if and only if the corresponding 
bit of $x$ is one. 

\begin{definition}
We will define a pair $(a, b) \sim (a', b')$ iff 
$S = \{i | a_i = b_i\} = S' = \{i | a'_i = b'_i\}$, 
$|S| < d$ and furthermore there is an $x \in \calC$ 
such that $(a', b')$ is the $x$-flip of $(a, b)$. 
\end{definition}

Next we will establish that this relation $\sim$ 
is indeed an equivalence relation, and that it is actually a relation
on unordered pairs, that is $(a, b) \sim (b, a)$ for all $a,b$:

\begin{claim}
$(a, b) \sim (b, a)$
\end{claim}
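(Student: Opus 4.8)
The plan is to show that the all-zeros vector $\vec{0}$ witnesses the equivalence $(a,b)\sim(b,a)$. First I would observe that swapping $a$ and $b$ does not change the agreement set: if $S=\{i\mid a_i=b_i\}$, then $\{i\mid b_i=a_i\}=S$ as well, so the condition on the index sets is automatic, and $|S|<d$ holds by hypothesis since $(a,b)$ is assumed to satisfy $d_H(a,b)>n-d$ (equivalently $|S|<d$), and this is the defining situation in which $\sim$ is considered. Next I would exhibit the codeword $x\in\calC$ required by the definition of $\sim$: since $\calC$ is a \emph{proper} code, the all-ones vector $\vec{1}$ of length $n$ is a codeword, and hence its restriction — wait, more precisely one should take $x=\vec{1}$ as an element of $\{0,1\}^{n-r}$, which is a codeword of the appropriate code $\calC_{n-r}$ in the nested family $\calC_n,\calC_{n-1},\ldots,\calC_{n-d+1}$, each of which is proper by the construction and by the row-deletion Claim above.

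Then I would verify directly from the definition of the $x$-flip that the $\vec{1}$-flip of $(a,b)$ is exactly $(b,a)$. Indeed, on indices $i\in S$ the flip leaves $c_i=a_i=b_i=d_i$ unchanged, which matches $(b,a)$ on $S$; and on each index $i=i_j\notin S$, since $x_j=1$ the flip sets $c_i=b_i$ and $d_i=a_i$, which is precisely the pair $(b,a)$ restricted to that coordinate. Hence $(c,d)=(b,a)$, so $(a,b)\sim(b,a)$ as claimed.

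The only mild subtlety — and the one point worth stating carefully — is which code the flip-vector must belong to: the definition of $\sim$ asks for $x\in\calC$, but when $|S|=r$ the relevant length is $n-r$, so the intended reading is $x\in\calC_{n-r}$, and one uses that $\vec{1}$ of length $n-r$ lies in $\calC_{n-r}$ because that code is proper. There are no computations to grind through; the whole argument is a direct check against the two definitions, and the essential input is properness of the codes, supplied by the Gilbert–Varshamov lemma and the row-deletion claim.
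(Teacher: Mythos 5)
Your proof is correct and takes essentially the same route as the paper's: appeal to properness of $\calC_{n-r}$ to get $\vec{1}\in\calC_{n-r}$, then check directly that the $\vec{1}$-flip of $(a,b)$ is $(b,a)$. The opening sentence announces that the all-zeros vector is the witness, which is a slip (the $\vec{0}$-flip is the identity, giving $(a,b)\sim(a,b)$, not $(a,b)\sim(b,a)$), but you immediately correct this to $\vec{1}$ and the rest of the argument is right.
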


This follows because the code $\calC_{n-r}$ is proper 
(for all $r < d$), and hence the all ones vector $\vec{1}$ lies in
$\calC_{n-r}$ 
and $(b, a)$ is the $\vec{1}$-flip of $(a, b)$. 

\begin{claim}
$(a, b) \sim (c, d)$ iff $(c, d) \sim (a, b)$
\end{claim}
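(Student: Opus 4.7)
The plan is to observe that for any fixed $S$ and $x\in\{0,1\}^{n-r}$, the $x$-flip operation on ordered pairs with agreement set $S$ is an involution, so the same codeword $x$ witnesses the relation in the reverse direction.

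First, I would unpack the hypothesis $(a,b) \sim (c,d)$: the agreement sets coincide, $S = \{i \mid a_i = b_i\} = \{i \mid c_i = d_i\}$, with $r=|S|<d$, and there is some $x \in \calC_{n-r}$ such that $(c,d)$ is the $x$-flip of $(a,b)$. I would then claim that $(a,b)$ is the $x$-flip of $(c,d)$ for the same $x$. To verify this, I would chase through the two cases in the definition: for $i \in S$ we automatically have $c_i=d_i=a_i=b_i$, so nothing needs to be checked; for $i = i_j \notin S$, if $x_j = 0$ then $(c_i,d_i)=(a_i,b_i)$, so applying the $x$-flip to $(c,d)$ at coordinate $i$ recovers $(a_i,b_i)$, while if $x_j=1$ then $(c_i,d_i)=(b_i,a_i)$, so swapping again recovers $(a_i,b_i)$. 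This shows the $x$-flip is its own inverse.

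Since the witnessing codeword $x \in \calC_{n-r}$ is the same and the agreement-set condition and the inequality $|S|<d$ are symmetric in the two pairs, all clauses of the definition of $\sim$ are satisfied with the roles of $(a,b)$ and $(c,d)$ exchanged. This gives $(c,d) \sim (a,b)$, and the reverse implication is identical by symmetry of the argument. There is no real obstacle here; the content of the claim is just the elementary fact that coordinate-wise swapping is an involution, together with the observation that the $x$-flip preserves the agreement set (since flipping only touches coordinates outside $S$, and on each such coordinate we have $a_i \neq b_i$, so the flipped pair still disagrees there).
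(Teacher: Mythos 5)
Your proof is correct and follows the same route as the paper: both identify that the same codeword $x$ witnesses the relation in the reverse direction because the $x$-flip is an involution. You simply spell out the coordinate-wise verification that the paper leaves implicit.
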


\begin{proof}
By symmetry we only need to establish one direction. 
Suppose $(a, b) \sim (c, d)$. Then $S = \{i | a_i = b_i\} 
= S' = \{i | c_i = d_i\}$. Let $(c, d)$ be an $x$-flip of 
$(a, b)$ (where $x \in \calC_{n-r}$). Then $(a, b)$ is also 
the $x$-flip of $(c, d)$. 
\end{proof}

\begin{claim}
$(a, b) \sim (c, d)$ and $(c, d) \sim (e, f)$ implies $(a, b) \sim (e, f)$
\end{claim}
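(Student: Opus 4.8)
The plan is to prove transitivity of $\sim$ by carefully composing flips. Suppose $(a,b) \sim (c,d)$ and $(c,d) \sim (e,f)$. First I would observe that the common-agreement set is preserved: $S = \{i : a_i = b_i\} = \{i : c_i = d_i\} = \{i : e_i = f_i\}$, with $r = |S| < d$, so all three pairs live over the same ground set $[n] \setminus S$ of size $n - r$, and the relevant code is $\calC_{n-r}$, which is proper. The key is to encode each flip as a membership statement in $\calC_{n-r}$ and show that ``being a flip of'' composes via addition in $\F_2^{n-r}$.

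Concretely, I would set up coordinates on $[n] \setminus S = \{i_1 < \dots < i_{n-r}\}$. For a pair $(u,v)$ with $\{i : u_i = v_i\} = S$, define its ``signature'' relative to a fixed reference pair — say relative to $(a,b)$ — as the vector $\chi(u,v) \in \{0,1\}^{n-r}$ whose $j$-th entry records whether the ordered pair $(u_{i_j}, v_{i_j})$ equals $(a_{i_j}, b_{i_j})$ (entry $0$) or equals the swapped pair $(b_{i_j}, a_{i_j})$ (entry $1$). This is well-defined precisely because on each coordinate outside $S$ the two values $a_{i_j} \neq b_{i_j}$ are distinct, so any pair agreeing with $(a,b)$ on $S$ and having the same disagreement set must, coordinate by coordinate, be one of these two options. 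The crucial bookkeeping fact is then: $(u,v)$ is the $x$-flip of $(u',v')$ if and only if $\chi(u,v) = \chi(u',v') + x$ in $\F_2^{n-r}$ (bitwise XOR). This is immediate from the definition of $x$-flip, since flipping by $x$ toggles exactly the coordinates where $x_j = 1$.

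Granting that dictionary, the proof is short: $(a,b) \sim (c,d)$ gives $x \in \calC_{n-r}$ with $(c,d)$ the $x$-flip of $(a,b)$, i.e. $\chi(c,d) = \chi(a,b) + x = \vec 0 + x = x$; similarly $(c,d) \sim (e,f)$ gives $y \in \calC_{n-r}$ with $\chi(e,f) = \chi(c,d) + y = x + y$. Hence $(e,f)$ is the $(x+y)$-flip of $(a,b)$, and since $\calC_{n-r}$ is a linear code, $x + y \in \calC_{n-r}$. Therefore $(a,b) \sim (e,f)$, and combined with the already-established reflexivity (via $\vec 1 \in \calC_{n-r}$, Claim on $(a,b)\sim(b,a)$ — actually reflexivity needs $\vec 0 \in \calC_{n-r}$, which holds as $\calC_{n-r}$ is linear) and symmetry, $\sim$ is an equivalence relation on unordered pairs.

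The main obstacle — really the only place needing care — is verifying that the signature $\chi$ is well-defined and that the composition law $\chi(u,v) = \chi(u',v') + x$ holds exactly as stated, including checking that an $x$-flip of a valid pair is again a valid pair with the same set $S$ (so that the relation is even type-correct along the chain). Everything else is a one-line reduction to linearity (closure under addition) of $\calC_{n-r}$. I would also note that we implicitly need $r < d$ throughout so that $\calC_{n-r}$ is defined and proper; this is guaranteed by the definition of $\sim$, which requires $|S| < d$.
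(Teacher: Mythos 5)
Your proof is correct and takes essentially the same route as the paper's: both reduce transitivity to the observation that if $(c,d)$ is the $x$-flip of $(a,b)$ and $(e,f)$ is the $y$-flip of $(c,d)$, then $(e,f)$ is the $(x+y)$-flip of $(a,b)$, and then invoke linearity of $\calC_{n-r}$; your signature $\chi$ simply makes the composition law explicit that the paper asserts without comment. One small imprecision worth flagging: the claim that ``any pair agreeing with $(a,b)$ on $S$ and having the same disagreement set must, coordinate by coordinate, be one of these two options'' is not true in general over the alphabet $[C]$ (such a pair could take entirely different values outside $S$) --- but it does hold for $(c,d)$ and $(e,f)$ in your chain precisely because they are constructed as flips of $(a,b)$, which is all your argument actually uses.
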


\begin{proof}
Again note that $S = \{i | a_i = b_i\} = S' = \{i | c_i = d_i\} 
= S'' = \{i | e_i = f_i\}$. Let $x, y \in \calC_{n-r}$ be such 
that $(c, d)$ is the $x$-flip of $(a, b)$ and $(e, f)$ is 
the $y$-flip of $(c, d)$. Then $x + y \in \calC_{n-r}$ 
since the code is linear, and $(e, f)$ is the $x+y$-flip of $(a, b)$. 
\end{proof}

This immediately implies:

\begin{lemma}
The relation $\sim$ is an equivalence relation over unordered 
pairs $(a, b)$ which have Hamming distance $> n -d$. 
\end{lemma}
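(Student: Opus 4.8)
The plan is to verify the three defining properties of an equivalence relation on the set of unordered pairs $(a,b)$ with $d_H(a,b) > n-d$, using the three claims just established. First, note that the claim $(a,b) \sim (b,a)$ shows that $\sim$ is well-defined on \emph{unordered} pairs: the set $S = \{i : a_i = b_i\}$ is obviously symmetric in $a$ and $b$, and the all-ones vector $\vec 1 \in \calC_{n-r}$ (since the code is proper) gives the $\vec 1$-flip taking $(a,b)$ to $(b,a)$. So for any representative $(a,b)$ of an unordered pair, $(a,b) \sim (b,a)$ and we may speak of the equivalence class of the unordered pair unambiguously.

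Next I would collect the three axioms. Reflexivity: for any pair $(a,b)$ with $|S| < d$, the all-zeros vector $\vec 0 \in \calC_{n-r}$ (every linear code contains $\vec 0$), and the $\vec 0$-flip of $(a,b)$ is $(a,b)$ itself, so $(a,b) \sim (a,b)$. (For pairs with $|S| \geq d$ the relation as written is not reflexive, but such pairs have $d_H(a,b) = n - |S| \le n-d$ and are therefore excluded from the ground set by the hypothesis $d_H(a,b) > n-d$; this is the one subtlety worth spelling out.) Symmetry is exactly the second claim, $(a,b)\sim(c,d)$ iff $(c,d)\sim(a,b)$. Transitivity is exactly the third claim, $(a,b)\sim(c,d)$ and $(c,d)\sim(e,f)$ imply $(a,b)\sim(e,f)$.

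Putting these together: on the set of unordered pairs $\{a,b\}$ with $d_H(a,b) > n-d$ — equivalently $|S| < d$ — the relation $\sim$ is reflexive, symmetric, and transitive, hence an equivalence relation. I would write this as a two-line deduction citing the three preceding claims, together with the one-sentence remark handling the $|S| \ge d$ case via the Hamming-distance hypothesis.

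There is no real obstacle here; the statement is an immediate corollary, and the only point requiring any care is the bookkeeping that restricting to pairs with $d_H(a,b) > n-d$ is precisely what makes $\sim$ reflexive (the condition $|S| < d$ in the definition of $\sim$ is then automatically satisfied, rather than being an extra constraint that could fail). The proof is therefore essentially a citation of the three claims plus this one observation.
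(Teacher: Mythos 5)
Your proposal is correct and follows essentially the same route as the paper, which simply states that the lemma "immediately" follows from the three preceding claims without spelling out the axioms. You are a bit more careful than the paper: you explicitly supply reflexivity via the $\vec 0$-flip (the paper implicitly takes this for granted) and you flag the small bookkeeping point that $|S|<d$ is exactly the condition $d_H(a,b)>n-d$; both additions are accurate and harmless.
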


Since each code $\calC_{n-r}$ (for $r < d$) has dimension 
$k$, each equivalence class has size exactly $2^k$. 

\begin{lemma}
Each equivalence class is an induced matching consisting of
$2^{k-1}$ edges.
\end{lemma}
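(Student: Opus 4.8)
The plan is to get the edge count from a fixed-point-free involution, and to get both the ``matching'' and the ``induced'' properties out of one short Hamming-distance computation. Fix an equivalence class, pick a representative ordered pair $(a,b)$ in it, put $S=\{i:a_i=b_i\}$ and $r=|S|<d$, and recall that the class, as a set of ordered pairs, equals $\{\,x\text{-flip of }(a,b):x\in\calC_{n-r}\,\}$ and has size exactly $2^k$. Since $d_H(a,b)=n-r>n-d\ge 0$ we have $a\ne b$, so no ordered pair $(c,d)$ in the class has $c=d$; moreover, if $(c,d)$ is the $x$-flip of $(a,b)$ then $(d,c)$ is the $(\vec{1}+x)$-flip of $(a,b)$, and $\vec{1}+x\in\calC_{n-r}$ because $\calC_{n-r}$ is proper. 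Hence $(c,d)\mapsto(d,c)$ is a fixed-point-free involution of the class, the $2^k$ ordered pairs split into $2^{k-1}$ orbits of size two, each orbit is one (unordered) edge, and these $2^{k-1}$ edges are pairwise distinct, which gives the stated count.

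Next I would take two \emph{distinct} edges $\{a,b\}$ and $\{c,d\}$ of the class, choosing the representatives so that both $(a,b)$ and $(c,d)$ lie in the underlying class of ordered pairs; then $(c,d)$ is the $x$-flip of $(a,b)$ for some $x\in\calC_{n-r}$, and $x\notin\{\vec{0},\vec{1}\}$, since those two codewords recover the edge $\{a,b\}$ itself. By the definition of the $x$-flip, $a$ and $c$ agree on $S$ and, at the $j$-th index outside $S$, disagree precisely when $x_j=1$; since $a$ and $b$ differ at \emph{every} index outside $S$, this gives $d_H(a,c)=\mathrm{wt}(x)$, and symmetrically $d_H(b,d)=\mathrm{wt}(x)$, while $d_H(a,d)=d_H(b,c)=(n-r)-\mathrm{wt}(x)$.

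Finally I would invoke the minimum distance $d-r$ of $\calC_{n-r}$ \emph{twice}: $x\ne\vec{0}$ forces $\mathrm{wt}(x)\ge d-r$, and (using that the proper code contains $\vec{1}$) $x\ne\vec{1}$ forces the nonzero codeword $\vec{1}+x$ to have weight $(n-r)-\mathrm{wt}(x)\ge d-r$. Therefore all four cross-distances $d_H(a,c),d_H(b,d),d_H(a,d),d_H(b,c)$ lie in $[\,d-r,\ n-d\,]\subseteq[\,1,\ n-d\,]$: being $\ge 1$, they certify that $\{a,b\}$ and $\{c,d\}$ are vertex-disjoint, so the class is a matching; being $\le n-d$, they certify, by the adjacency rule $d_H>n-d$, that none of the four cross-pairs is an edge of $G$, so $G[\{a,b,c,d\}]$ consists only of $\{a,b\}$ and $\{c,d\}$. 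As this holds for every pair of distinct edges of the class, the class is an induced matching. The computation itself is routine; the one point that needs care — and the place where the construction is rigged to work — is that one must bound both $x$ and $\vec{1}+x$ below by the code distance $d-r$ (the first bound rules out $x=\vec{0}$, the second rules out $x=\vec{1}$), and the resulting upper bound $(n-r)-(d-r)=n-d$ must coincide exactly with the non-adjacency threshold of $G$; this is precisely why the truncated codes are taken with parameters $[n-r,k,d-r]$ and required to be proper.
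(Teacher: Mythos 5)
Your proof is correct and follows essentially the same argument as the paper's: you compare cross-Hamming distances via the weight of the flip codeword $x$, bounding both $\mathrm{wt}(x)$ and $\mathrm{wt}(\vec{1}+x)$ below by $d-r$ using the code's minimum distance together with properness ($\vec{1}\in\calC_{n-r}$). You are somewhat more explicit than the paper — separating the two exclusions $x\ne\vec{0}$ and $x\ne\vec{1}$, computing all four cross-distances, and deriving the $2^{k-1}$ count via the fixed-point-free involution $(c,d)\mapsto(d,c)$ — but the mechanism is the same.
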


\begin{proof}
Consider two edges $(a, b)$ and $(e, f)$ in the same equivalence 
class. Let $S = \{i | a_i = b_i\} = \{i | e_i = f_i\}$ 
where $|S| = r ~ (<d)$. Let $(e, f)$ be the $x$-flip of 
$(a, b)$ for $x \in \calC_{n-r}$. Since the code $\calC_{n-r}$ 
has distance at least $d-r$, the Hamming weight of $x$ is at 
least $d-r$. Consider the Hamming distance between $a$ and $f$. 
Each index $i \in S$ is an index at which $a$ and $f$ agree 
(i.e. $a_i = f_i$). Furthermore, there is a bijection between 
indices in $x$ that are set to one and indices outside of the set 
$S$,  for which $a$ and $f$ agree. So the vectors 
$a$ and $f$ agree on at least $r + (d - r) = d$ indices, 
and hence $af$ is not an edge in $G$.  Since $(e,f)$ and
$(f,e)$ are in the same equivalence class the above argument also
shows that $ae$,$be$ and $bf$ are nonedges.
\end{proof}

If we use one induced matching for each equivalence class, 
then each edge in $G$ is covered exactly once and hence the 
number of induced matchings needed to cover $G$ 
is $\frac{|E|}{2^{k-1}} \leq \frac{N^2}{2^k}$. 

\begin{theorem}~\label{thm:ecccon}
For every $n, d, C$ such that $\frac{d}{n} \geq \frac{2}{C-1}$ ,
there is a graph $G$ on $N = C^n$ vertices that is missing at most 
$ N^e$ edges, for 
$$
e = 1 + \frac{ H(d/n) + (1- d/n)\log_2 (C-1) }
{\log_2 C} + o(1)
$$ 
and can be covered by $N^{f}$ disjoint induced matchings, 
where $$f =2-  \frac{1 - H(d/n)}{\log_2 C} + o(1)$$. 
\end{theorem}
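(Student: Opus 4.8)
The plan is to assemble Theorem~\ref{thm:ecccon} directly from the lemmas already established in Section~\ref{sec:ecc}, so that the proof is essentially a bookkeeping argument about the two exponents. First I would fix the graph $G = (V,E)$ on $V = [C]^n$ with $N = C^n$ vertices defined by the adjacency rule $d_H(a,b) > n-d$, exactly as in the section. For the missing-edge bound, I would invoke the Claim counting $\binom{N}{2} - |E| \le \frac12 C^N \sum_{i=d}^n \binom{n}{i}(C-1)^{n-i}$ together with the Lemma that, under the hypothesis $d/n \ge 2/(C-1)$, this sum is at most $\binom{n}{d} C^n (C-1)^{n-d}$. It then remains to check that this last quantity is $N^e$ with $e = 1 + \frac{H(d/n) + (1-d/n)\log_2(C-1)}{\log_2 C} + o(1)$: take $\log_C$ of $\binom{n}{d} C^n (C-1)^{n-d}$, use the standard estimate $\log_2 \binom{n}{d} = n H(d/n) + o(n)$, and divide by $\log_2 C = \log_2 N^{1/n}\cdot n$, i.e. observe $\log_C x = \frac{\log_2 x}{\log_2 C}$ and $n = \frac{\log_2 N}{\log_2 C}$ so that the $C^n$ factor contributes exactly $1$ to the exponent and the remaining factors contribute $\frac{H(d/n) + (1-d/n)\log_2(C-1)}{\log_2 C}$.

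Next I would handle the covering. The equivalence relation $\sim$ on unordered edges of $G$ has already been shown to partition $E(G)$, with each class an induced matching of $2^{k-1}$ edges (here $k$ is the dimension of the code $\calC = \calC_n$ and of each $\calC_{n-r}$ for $r<d$). Using one induced matching per class covers every edge exactly once, so the number of induced matchings is $|E|/2^{k-1} \le N^2 / 2^k$. To read off the exponent, I would invoke the Gilbert--Varshamov lemma for proper codes to choose $k = (1 - H(d/n))n$ (more precisely, $k \ge (1-H(d/n))n - o(n)$ suffices). Then $\log_C(N^2/2^k) = 2 - \frac{k}{\log_2 C} = 2 - \frac{(1-H(d/n))n}{\log_2 C} + o(1) = 2 - \frac{1 - H(d/n)}{\log_2 C} + o(1)$, again because $n\log_2 C = \log_2 N$, which gives the claimed $f = 2 - \frac{1-H(d/n)}{\log_2 C} + o(1)$.

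The only mild subtlety — and the place I'd be most careful — is making sure the chain of proper codes $\calC_n, \calC_{n-1}, \dots, \calC_{n-d+1}$ can all be taken to have the \emph{same} dimension $k$, since the definition of $\sim$ and the count ``each class has size $2^k$'' rely on this. This is exactly what the earlier Claim about deleting a row of a generating matrix provides: starting from a proper $[n,k,d]$ code obtained from the Gilbert--Varshamov lemma (which needs $\sum_{i=0}^d \binom{n}{i} < 2^{n-k}$, satisfiable with $k = (1-H(d/n))n - o(n)$), successive row deletions yield proper $[n-r, k, d-r]$ codes for all $0 \le r \le d-1$, preserving $k$. So the dimension is genuinely constant along the chain, the equivalence classes all have size $2^k$, and the two exponent computations go through. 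Everything else is the routine asymptotic algebra sketched above, together with the observation that disjointness of the matchings is automatic since distinct equivalence classes are disjoint sets of edges.
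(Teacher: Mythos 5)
Your proposal matches the paper's proof essentially line for line: the same graph on $[C]^n$ with adjacency $d_H(a,b)>n-d$, the same two-part counting (missing edges via the binomial tail bound under $d/n\ge 2/(C-1)$, matchings via $|E|/2^{k-1}\le N^2/2^k$ with $k=(1-H(d/n))n$ from the proper Gilbert--Varshamov code), and the same reliance on the row-deletion claim to keep the dimension $k$ constant along the chain $\calC_n,\dots,\calC_{n-d+1}$. The only blemishes are a couple of algebraic slips in the exponent bookkeeping (e.g.\ you wrote $\log_2 C=\log_2 N^{1/n}\cdot n$ where you meant $\log_2 C=\tfrac{1}{n}\log_2 N$, and an extraneous factor of $n$ appears briefly in the $\log_C(N^2/2^k)$ line), but these cancel in the final step and do not affect the argument.
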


In particular, for any $\eps > 0$, there is a graph $G$ on $N$ vertices
missing at most $N^{3/2 + \eps}$ edges that can be covered 
by $N^{2 - c \eps^3}$ induced matchings.  This is obtained by choosing
$C$ for which $\log_2 C=\Theta(1/\epsilon)$ and 
$d/n=\frac{1}{2}-\Theta(\epsilon)$.

Also we can choose $C = 34$ and $d = 0.19 n$, in which case 
$e, f <  1.942$. Thus we can cover the edges of a complete 
graph on $2N$ vertices
by two graphs $G_1$ (set to $G$ with $N$ replaced by $2N$ 
in the above construction) and $G_2$ (set to the complement of $G$), 
where the number of induced matchings needed to cover the 
edges of $G_1$  is 
$O(N^{2 - \delta})$ for $\delta > 0.058$, and the same holds for $G_2$. 
For the applications we need that the above statement holds also for 
covering all edges of the complete bipartite graph $K_{N,N}$ by
two such graphs $G'_1$ and $G'_2$-this clearly follows by 
splitting the vertices of $K_{2N}$ arbitrarily into two equal classes
and by defining $G'_i$, for $i=1,2$,
to be the graph obtained from $G_i$ by keeping
only the edges that have one endpoint in each class.

\section{Limits}

\subsection{Triangle Removal Lemma}

The connection between the triangle removal lemma and the existence
of $(r,t)$-RS graphs is well known since the work of Ruzsa and 
Szemer\'edi, for completeness we include the argument.

\begin{prop}
\label{p91}
If there exists an $(r,t)$-RS graph  on $N$ vertices, then there
exists a graph on $N+t$ vertices with at least $3rt/2$ edges,
in which every edge is contained in exactly one triangle. Thus one has
to delete at least $rt/2$ edges to destroy all triangles and yet
the graph contains only $rt/2$ triangles.
\end{prop}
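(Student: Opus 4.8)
The plan is to start from the given $(r,t)$-RS graph $G=(V,E)$ on $N$ vertices, whose edge set decomposes into pairwise edge-disjoint induced matchings $M_1,\dots,M_t$, each of size $r$. I would construct a new graph $\widehat{G}$ on the vertex set $V \cup \{w_1,\dots,w_t\}$, where $w_1,\dots,w_t$ are $t$ fresh vertices, one per matching. The edges of $\widehat{G}$ are: all the edges of $G$, plus, for each $j$ and each edge $\{a,b\}\in M_j$, the two ``spoke'' edges $\{a,w_j\}$ and $\{b,w_j\}$. So each matching edge $\{a,b\}\in M_j$ gets completed into a triangle $\{a,b,w_j\}$, and the total edge count is $rt + 2\cdot rt = 3rt$ — wait, I should recount: the $rt$ original edges of $G$ plus $2r$ spokes for each of the $t$ matchings gives $rt + 2rt = 3rt$ edges, which is even more than the claimed $3rt/2$; in fact the cleaner bookkeeping is simply $|E(\widehat G)| = 3|E| = 3rt \ge 3rt/2$, so the stated bound holds comfortably. (One may also phrase it as exactly $3rt$.)

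The heart of the argument is the claim that every edge of $\widehat G$ lies in exactly one triangle. I would split into cases by edge type. For a spoke edge $\{a,w_j\}$ with $\{a,b\}\in M_j$: any triangle through it needs a common neighbor of $a$ and $w_j$; the neighbors of $w_j$ are exactly the endpoints of edges of $M_j$, and among these, the only one adjacent to $a$ is $b$, because $M_j$ is an \emph{induced} matching in $G$ — so $a$ has no $G$-neighbor among the other $M_j$-endpoints, and $b$ is matched to $a$ so it is the unique candidate. Hence the unique triangle is $\{a,b,w_j\}$. For an original edge $\{a,b\}\in E$: it lies in the unique matching $M_j$ containing it, giving the triangle $\{a,b,w_j\}$. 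Any other triangle would require a common neighbor $v\ne w_j$; such a $v$ cannot be one of the $w_i$'s with $i\ne j$ (since $w_i$'s only neighbors are $M_i$-endpoints, and $\{a,b\}$ being in $M_j$ with $j\ne i$ means $a,b$ are not simultaneously $M_i$-endpoints — here I'd note that distinct induced matchings are edge-disjoint, and if both $a,b$ were $M_i$-endpoints adjacent in $G$ they'd form an $M_i$-edge, contradicting edge-disjointness), and $v$ cannot lie in $V$ because then $\{a,b,v\}$ would be a triangle in $G$ itself, contradicting that $\{a,b\}$ sits in an \emph{induced} matching (no two $G$-edges incident to a common vertex with both neighbors adjacent). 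So the triangle is unique.

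From ``every edge is in exactly one triangle'' the two concluding sentences are immediate: the triangles are pairwise edge-disjoint (a shared edge would lie in two triangles), so there are exactly $|E(\widehat G)|/3 = rt$ triangles — here I should reconcile with the statement's $rt/2$: the statement counts triangles of $\widehat G$ as $rt/2$, which matches if one instead uses the variant with $|E(\widehat G)| = 3rt/2$; I would simply follow whichever normalization the paper adopts and state the triangle count as (number of edges)/3. To destroy all triangles one must delete at least one edge per triangle, and since the triangles are edge-disjoint this forces deleting at least (number of triangles) edges, i.e. at least $rt/2$ in the paper's normalization.

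The main obstacle — really the only subtle point — is verifying uniqueness of the triangle through an \emph{original} edge $\{a,b\}$: one must use crucially both that each $M_j$ is an \emph{induced} matching (ruling out a third vertex in $V$, and ruling out other $M_j$-endpoints serving as apexes of spoke triangles) and that the matchings are pairwise \emph{edge-disjoint} (ruling out that $a,b$ are jointly endpoints of a second matching $M_i$). Everything else is routine counting, so I would present the case analysis carefully and keep the arithmetic terse.
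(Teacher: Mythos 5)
Your construction does not actually satisfy the ``every edge is in exactly one triangle'' property, and the place where your argument breaks is precisely the step the paper's proof is designed to circumvent.

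You keep \emph{all} of $G$ and add the apex vertices $w_j$. For an original edge $\{a,b\}\in M_j$, you need to rule out a third vertex $v\in V$ with $av, bv\in E(G)$. You claim this contradicts $M_j$ being an induced matching. It does not: the induced-matching condition only forbids $G$-edges between vertices \emph{covered by} $M_j$; it says nothing about a vertex $v$ not covered by $M_j$. An $(r,t)$-RS graph can certainly contain a triangle $\{a,b,v\}$ where $\{a,b\}\in M_j$ and $v$ lies outside $M_j$ (indeed the dense constructions in this very paper have plenty of triangles), and then in your $\widehat G$ the edge $\{a,b\}$ lies in at least two triangles, $\{a,b,w_j\}$ and $\{a,b,v\}$. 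Your parenthetical gloss ``no two $G$-edges incident to a common vertex with both neighbors adjacent'' is not what ``induced matching'' means.

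The paper avoids this by first passing to a bipartite subgraph $G'=(U,V,E')$ of $G$ with $|E'|\ge rt/2$ (always possible), restricting each $M_i$ to its surviving edges, and only then attaching the $w_i$'s. The resulting graph $H$ is \emph{tripartite} on $U\cup V\cup\{w_1,\dots,w_t\}$, so a triangle must take one vertex from each part; in particular no triangle lives inside $U\cup V$, which is exactly the case your argument fails to exclude. This is also why the edge count is $3|E'|\ge 3rt/2$ rather than your $3rt$: the factor $1/2$ is the price paid for bipartiteness, and it is what makes the uniqueness claim true. Your analysis of the spoke edges and of the $w_i$ apexes is fine; the missing idea is the bipartization step.
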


\begin{proof}
Let $G$ be an $(r,t)$-RS graph  on $N$ vertices. Then its number
of edges is $rt$ and hence, by a well known simple result, it contains 
a bipartite subgraph 
$G' = (U, V, E')$ with at least $rt/2$ edges. Clearly, these
edges can be covered by $t$ induced matchings $M_1, M_2, ... M_t$, and
we can  assume that these matchings are pairwise edge disjoint. 

For each matching $M_i$, add an additional vertex
$w_i$ and connect $w_i$ to the endpoints of all edges in $M_i$. 
The resulting graph $H = (U, V, W, E_H)$ is tripartite, 
has $N+t$ vertices and contains $|E'|$ triangles. The critical 
property of this construction is that 
each edge of $H$ is in a unique triangle. 
Indeed, there is a natural set of  $|E'|$ triangles in $H$ - 
each such triangle is specified by an edge $(u, v) \in E'$ and if 
this edge is contained in the matching $M_i$, this edge is mapped 
to the triangle $(u, v, w_i)$ in $H$. 
There are in fact no other triangles in $H$: Let $T = (a, b, c)$ be 
a triangle in $H$. Since $H$ is tripartite, there must be exactly 
one vertex from each set $U, V$ and $W$ in the set 
$a, b, c$. Suppose that $a \in U$ and $b \in V$. 
Then let $M_i$ be the unique matching containing the edge $(a, b)$. 
Suppose $c = w_j \neq w_i$. This implies that the matching $M_j$ 
covers both vertices $a$ and $b$ but does not contain the edge 
$(a, b)$, and hence $M_j$ is not an induced matching, contradiction. 
This completes the proof.
\end{proof}

The triangle removal lemma  of \cite{RS}, which is one of the early
major applications of the regularity lemma, asserts that for any
$\epsilon>0$ there is a $\delta=\delta(\epsilon)>0$ so that
for $N>N(\epsilon)$ any graph on $N$ vertices from which one has
to delete at least  $\epsilon N^2$ edges to destroy all triangles
contains at least $\delta N^3$ triangles. This and the above  
proposition implies that there are no $(r,t)$-RS graphs on $N$ vertices
with $r=\Omega(N)$ and $t=\Omega(N)$. The original proof of \cite{RS}
provides a rather poor quantitative  relation between $\epsilon$ and
$\delta$, but the improved recent proof of Fox \cite{F} supplies 
better estimates (which are still very far from the known constructions).
If the number of vertices is $N$ and the graph is a pairwise disjoint union
of $t$ induced matching, each of size
$r=cN$, then  $t$ is at most $N/\log^{(x)} N$,
with $x = O(\log (1/c))$, where
$\log^{(x)} N$ denotes the $x$-fold iterated logarithm.  For more details,
see \cite{F}.

\subsection{Reconstruction Principle}\label{sec:recon}

Here we prove lower bounds on the number of edges that a graph 
must miss, if it can be covered by disjoint induced matchings 
of size $r$. These lower bounds establish that the results in 
Section~\ref{sec:ecc} are essentially tight for an important range
of the parameters. Indeed, as proved in that section
there are graphs on $N$ vertices missing 
$N^{3/2 + \epsilon}$ edges that can be covered by disjoint, 
induced matchings of polynomial size. Yet, as we show below,
any graph that can 
be covered by disjoint, induced matchings of size {\em two} 
or more must miss at least $N^{3/2}$ edges.  We describe two proofs,
The first is based on entropy considerations, and the second
is an elementary  combinatorial proof, that in fact yields a
somewhat stronger result, as it bounds the minimum degree in the
graph of missing edges. We believe, however, that both methods 
are interesting and each may have further applications.
We start with the entropy proof.

Let $G = (V, E)$ be a graph on $N$ vertices that can be covered by
disjoint induced matchings $M_1, M_2, ... M_t$ each of size $r \geq
2$. We will prove an upper bound on the number of edges $|E|$ based on
an application of the reconstruction principle (and through information
theoretic inequalities).

To this end, we define a random variable $A$ as follows: 

\begin{itemize}

\item Choose $M_i$ uniformly at random

\item Choose an ordered set of two distinct edges $e_1, e_2$ from $M_i$

\end{itemize}

Set $A = (e_1, e_2)$. Let $e_1 = (W, X)$ and $e_2 = (Y, Z)$. Here we
use upper-case letters to denote that each of these choices $W, X, Y$
and $Z$ is a random variable and we will use lower case letters to denote
specific choices of these random variables.

\begin{claim}\label{claim:lower}
$H(A) = \log |E| + \log (r-1)$
\end{claim}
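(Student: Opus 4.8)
The plan is to compute the entropy $H(A)$ directly from the definition of the random variable $A$, by breaking the random choice into two stages and using the chain rule for entropy. First I would observe that the sampling procedure for $A$ is: pick $M_i$ uniformly among the $t$ matchings, then pick an ordered pair of distinct edges from $M_i$. Since the matchings are pairwise edge-disjoint and cover $E$, every edge of $G$ lies in exactly one matching, so choosing $M_i$ uniformly and then an edge $e_1$ uniformly from $M_i$ is \emph{not} quite the same as choosing $e_1$ uniformly from $E$ unless all matchings have the same size — but here they do: each $M_i$ has size exactly $r$. Hence the marginal distribution of $e_1$ is uniform over $E$, contributing $\log|E|$, and similarly the marginal of $e_2$.

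The cleanest route is to note that $A=(e_1,e_2)$ is itself uniformly distributed over its support. Indeed, the support consists of all ordered pairs $(e_1,e_2)$ of distinct edges lying in a common matching $M_i$; since the $M_i$ are edge-disjoint, each such pair determines $i$ uniquely, so the number of pairs in the support is $\sum_{i=1}^t r(r-1) = t\,r(r-1)$. Each outcome has probability $\frac1t\cdot\frac1{r(r-1)}$, so $A$ is uniform on a set of size $t\,r(r-1)$, giving
$$H(A) = \log\bigl(t\,r(r-1)\bigr) = \log(tr) + \log(r-1) = \log|E| + \log(r-1),$$
where in the last step I use that $|E| = tr$ because $G$ is the disjoint union of $t$ induced matchings each of size $r$. (All logarithms are to the same fixed base, consistent with the entropy convention used in the rest of this section.)

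I do not anticipate a real obstacle here: the only subtlety is confirming that the sampling distribution is genuinely uniform on the stated support, which hinges on (a) all matchings having the same size $r$, so the two-stage sampling does not bias toward edges in smaller matchings, and (b) edge-disjointness of the matchings, so that an ordered pair of distinct edges sharing a matching pins down that matching and no outcome is produced in two different ways. Both are immediate from the setup: the cover is by disjoint induced matchings $M_1,\dots,M_t$ each of size $r\ge 2$. With these two facts in hand the computation above is just counting, and the claim follows.
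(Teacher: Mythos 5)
Your proof is correct and rests on exactly the same observations as the paper's: uniformity of the two-stage sampling (because all matchings have equal size $r$) and edge-disjointness (so each edge pins down its matching). The paper phrases it via the chain rule ($e_1$ uniform on $E$, then $e_2$ uniform on the $r-1$ remaining edges of $M_i$), while you note the equivalent fact that $A$ is uniform on its support of size $|E|(r-1)$; these are the same argument in slightly different clothing.
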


\begin{proof}
Since we choose each matching $M_i$ uniformly at random, and each matching
is of the same size ($r$), 
the first edge $e_1$ is chosen uniformly at random
from the set $|E|$. Conditioned on the choice of $e_1$, the remaining edge
$e_2$ is chosen uniformly at random from the $r-1$ other edges in $M_i$.
\end{proof}

Let $d_v$ be the number of missing edges incident to $v \in V$. Let
$D_v$
be the set of non-neighbors of $v$, and let $f_v: D_v \rightarrow [d_v]$
be a function mapping each non-neighbor of $v$ to a unique integer in
the set $[d_v]$.

\begin{itemize}

\item Choose $A$ as above and let $e_1 = (w, x)$ and $e_2 = (y, z)$

\item Choose $S_1$ with probability $1/2$ to be either $w$ or $x$, and let $S_3$ be the opposite choice

\item Choose $S_2$ with probability $1/2$ to be either $y$ or $z$

\end{itemize}

We set the random variable $B = [s_1, f_{s_1}(s_2), f_{s_2}(s_3)]$.

\begin{lemma}\label{lemma:relation}
$H(B) \geq H(A)$
\end{lemma}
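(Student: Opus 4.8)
The plan is to show that $B$ determines $A$, so that $H(B) \ge H(A)$ follows from the basic fact that a deterministic function of a random variable has no larger entropy (equivalently, $H(A \mid B) = 0$). Concretely, I would argue that from the value $b = [s_1, f_{s_1}(s_2), f_{s_2}(s_3)]$ together with the fixed functions $\{f_v\}_{v \in V}$ one can recover the unordered pair of edges $\{e_1, e_2\}$ with their matching structure, and then $A = (e_1,e_2)$ itself. The key point is that the covering is by \emph{induced} matchings, so adjacencies between the four endpoints $w,x,y,z$ are severely constrained.

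First I would recover the vertices. From $b$ we are given $s_1$ directly. Since $f_{s_1}$ is an injection from $D_{s_1}$ (the non-neighbors of $s_1$) to $[d_{s_1}]$, the value $f_{s_1}(s_2)$ together with knowledge of $s_1$ lets us invert $f_{s_1}$ and recover $s_2$ — \emph{provided} we know that $s_2 \in D_{s_1}$, i.e.\ that $s_1$ and $s_2$ are non-adjacent. This is exactly where the induced-matching property enters: $s_1$ is an endpoint of $e_1$ and $s_2$ is an endpoint of $e_2$, and $e_1, e_2$ lie in the same induced matching $M_i$; since $s_1$ is not the partner of $s_2$ in $M_i$ (they come from different edges), the induced condition forces $s_1 s_2 \notin E$. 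Hence $s_2 \in D_{s_1}$ and $s_2$ is determined. The same reasoning shows $s_3 \in D_{s_2}$: $s_3$ is the endpoint of $e_1$ other than $s_1$, so $s_3$ and $s_2$ lie in different edges of the induced matching $M_i$, hence are non-adjacent, so inverting $f_{s_2}$ recovers $s_3$. Thus from $b$ we recover the triple $(s_1, s_2, s_3)$.

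Next I would recover $A$ from $(s_1,s_2,s_3)$. We have $e_1 = (s_1, s_3)$ as an \emph{ordered} pair — indeed, $e_1 = (w,x)$ and $(s_1,s_3)$ is either $(w,x)$ or $(x,w)$, but since $A$ is an ordered pair of \emph{edges} and an edge is unordered, $e_1 = (s_1,s_3)$ is exactly the first edge of $A$. The second edge is $e_2 = (y,z)$, and $s_2 \in \{y,z\}$; its partner in $M_i$ is the unique vertex adjacent to $s_2$ in $M_i$. Here one must be slightly careful: knowing $s_2$ alone does not immediately reveal its partner without reference to the matching. However, $e_2$ is an edge of $G$ containing $s_2$, and among all edges of $G$ incident to $s_2$, exactly one lies in $M_i$; moreover the matching $M_i$ is determined as the one containing $e_1 = (s_1,s_3)$. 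Since each edge of $G$ lies in a unique one of the disjoint matchings $M_1, \dots, M_t$, the matching index $i$ is recovered from $e_1$, and then $e_2$ is the unique edge of $M_i$ incident to $s_2$. So $A = ((s_1,s_3), e_2)$ is fully determined by $b$, giving $H(A \mid B) = 0$ and therefore $H(B) = H(A) + H(B \mid A) \ge H(A)$.

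The main obstacle — and the only real content — is the step showing $s_2 \in D_{s_1}$ and $s_3 \in D_{s_2}$, i.e.\ that the relevant pairs of endpoints are genuinely non-adjacent in $G$; this is what makes the functions $f_{s_1}, f_{s_2}$ applicable and invertible, and it is precisely where the hypothesis that the $M_i$ are induced (not merely ordinary) matchings is used. Everything else is bookkeeping: unwinding the definitions of the random variables, using that the $M_i$ are pairwise edge-disjoint and cover $E$ so that each edge has a well-defined matching index, and invoking $H(B) \ge H(A)$ whenever $A$ is a function of $B$. I would present the argument as: (i) $A$ is a deterministic function of $B$ via an explicit reconstruction procedure; (ii) hence $H(A \mid B) = 0$; (iii) hence $H(B) \ge H(B) - H(B \mid A) = H(A) - H(A\mid B)\cdot$ — more cleanly, $H(B) \ge H(A,B) - H(B\mid A)$ is not the right chain; rather $H(B) = H(A, B) - H(A \mid B) = H(A,B) \ge H(A)$, using $H(A \mid B) = 0$ and $H(A,B) \ge H(A)$.
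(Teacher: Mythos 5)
Your proposal is correct and follows the same approach as the paper: show that $A$ is a deterministic function of $B$ via the explicit reconstruction $s_1 \mapsto s_2 \mapsto s_3 \mapsto e_1 \mapsto M_i \mapsto e_2$, and then apply the chain rule for entropy. You have spelled out a detail the paper leaves implicit, namely that the induced-matching property is what guarantees $s_2 \in D_{s_1}$ and $s_3 \in D_{s_2}$, so that the maps $f_{s_1}$ and $f_{s_2}$ are actually applicable and invertible on the right domains.
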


\begin{proof}
We prove that $A$ can be computed as a deterministic function of $B$, and then we apply the Chain Rule for entropy to prove the Lemma.

\begin{claim}
$A$ can be computed as a deterministic function of $B$
\end{claim}

\begin{proof}
Given $B$, we can compute $s_2$ using $s_1$ and $f_{s_1}(s_2)$, and 
using $s_2$ and $f_{s_2}(s_3)$ we can compute $s_3$. This in turn 
defines the edge $e_1 = (s_1, s_3)$ which uniquely 
determines $M_i$ since the set of matchings disjointly 
covers the edges in $G$. From $M_i$ and $s_2$, 
we can compute the remaining edge $e_2$: this is the unique 
edge incident to $s_2$ in the matching $M_i$. 
\end{proof}

The Chain Rule for entropy yields the expansion $H(B, A) = H(B) + H(A |
B)$, but $H(A | B) = 0$ because $A$ is a deterministic function of $B$. We
can alternatively expand $H(B, A)$ as $H(A) + H(B | A)$. Since $H(B |
A) \geq 0$ we get $H(B) = H(B, A) \geq H(A)$, as desired.
\end{proof}

Next, we give an upper bound for the entropy of $B$ (based on the number
of missing edges), and this combined with the Lemma above will imply a
contradiction if the number of missing edges is too small.

\begin{definition}
We will call a random variable $S$ on $V$ degree-uniform if $S$ chooses a random vertex proportional to the degree in $G$. 
\end{definition}

\begin{claim}\label{claim:unif}
$S_1$ and $S_2$ are degree-uniform random variables
\end{claim}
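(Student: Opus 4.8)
The plan is to show that each of $S_1$ and $S_2$ is, marginally, a vertex chosen with probability proportional to its degree in $G$. First I would recall how $S_1$ was produced: we pick a matching $M_i$ uniformly at random, then pick an ordered pair of distinct edges $(e_1,e_2)$ from $M_i$, write $e_1=(w,x)$, and then let $S_1$ be $w$ or $x$ each with probability $1/2$. Unwinding this, the first edge $e_1=(w,x)$ is (by Claim~\ref{claim:lower}'s proof) distributed uniformly over all of $E$, since every matching has the same size $r$ and we pick the matching uniformly then an edge uniformly within it. So $S_1$ is obtained by picking a uniformly random edge of $G$ and then one of its two endpoints uniformly at random.

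The key computation is then routine: for a fixed vertex $v$,
$$\Pr[S_1 = v] = \sum_{e \ni v} \Pr[e_1 = e]\cdot \tfrac12 = \sum_{e \ni v} \frac{1}{|E|}\cdot\frac12 = \frac{d^G_v}{2|E|},$$
where $d^G_v$ denotes the degree of $v$ in $G$ (the number of edges of $E$ incident to $v$). Hence $\Pr[S_1=v]$ is proportional to $d^G_v$, so $S_1$ is degree-uniform. For $S_2$, note that $e_2=(y,z)$ is also marginally a uniformly random edge of $G$: conditioned on $e_1$, the edge $e_2$ is uniform over the remaining $r-1$ edges of $M_i$, and averaging over the uniform choice of $e_1$ (hence of $M_i$) and using that all matchings have equal size makes $e_2$ uniform over $E$ as well. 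Then $S_2$ is $y$ or $z$ with probability $1/2$ each, so exactly the same computation gives $\Pr[S_2=v]=d^G_v/(2|E|)$, and $S_2$ is degree-uniform too.

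The only point requiring a little care — and the main (mild) obstacle — is justifying that the marginal of $e_2$ is uniform over $E$ despite the conditioning on $e_1$ and the fact that $e_2$ must lie in the same matching as $e_1$: one writes $\Pr[e_2=e] = \sum_i \Pr[M_i]\sum_{e'\in M_i, e'\ne e}\Pr[e_1=e'\mid M_i]\Pr[e_2=e\mid e_1=e',M_i]$, observes that for $e\in M_i$ this inner double sum equals $(r-1)\cdot\frac1r\cdot\frac1{r-1}=\frac1r$, and that $e$ lies in exactly one matching, giving $\Pr[e_2=e]=\frac1t\cdot\frac1r\cdot(\text{number of matchings containing }e)=\frac1{tr}=\frac1{|E|}$. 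With both marginals identified as $d^G_v/(2|E|)$, the claim that $S_1$ and $S_2$ are degree-uniform follows immediately.
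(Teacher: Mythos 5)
Your proof is correct and follows the same underlying idea as the paper's: both you and the paper show that each of $e_1$ and $e_2$ is marginally a uniformly random edge of $G$ (using that every matching has the same size $r$ and is chosen uniformly), after which taking a uniformly random endpoint gives a degree-uniform vertex. The paper argues the uniformity of $e_2$ by observing that one could equivalently sample $e_2$ first and then $e_1$ from the same matching, while you verify it by a direct calculation using the fact that each edge lies in exactly one matching; these are the same argument, presented once conceptually and once computationally.
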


Note that these two random variables are not independent!

\vspace{0.5pc}

\begin{proof} We can choose the random variable $A$ by choosing an edge
uniformly at random from $E$, setting this edge to be $e_1$ and choosing
$e_2$ uniformly at random from the remaining edges in the matching $M_i$
that contains $e_1$. The distribution of $S_1$ in this sampling procedure
(for $A$) is clearly degree-uniform.

To prove the remainder of the Claim, we can slightly modify the sampling
procedure for $A$. We could instead choose an edge uniformly at random
from $E$ and set this edge to be $e_2$. Then choose an edge $e_1$
uniformly at random from the other edges in the matching $M_i$ that
contains $e_2$. This is an equivalent sampling procedure for generating
$A$, and from this procedure it is clear that $S_2$ is degree-uniform.
\end{proof}

Let $\bar{d}$ be the average degree in the complement of $G$. 

\begin{lemma}\label{lemma:upper}
$H(B) \leq \log N + 2 \log  \bar{d}$
\end{lemma}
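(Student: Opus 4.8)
The plan is to bound $H(B)$ by the chain rule, decomposing $H(B) = H(S_1) + H(f_{S_1}(S_2) \mid S_1) + H(f_{S_2}(S_3) \mid S_1, f_{S_1}(S_2))$. First I would handle the leading term: since $S_1$ is degree-uniform by Claim~\ref{claim:unif}, its distribution is supported on at most $N$ vertices, so $H(S_1) \le \log N$. Next, for the middle term I would observe that, conditioned on $S_1 = s_1$, the value $f_{s_1}(S_2)$ lies in $[d_{s_1}]$ because $s_2$ is always a non-neighbor of $s_1$ (indeed $s_1$ and $s_2$ belong to two distinct edges of the same induced matching $M_i$, hence there is no edge between them, and $s_1 \ne s_2$). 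Therefore $H(f_{S_1}(S_2) \mid S_1 = s_1) \le \log d_{s_1}$, and averaging over $S_1$ gives $H(f_{S_1}(S_2)\mid S_1) \le \E_{S_1}[\log d_{S_1}]$. By Jensen's inequality applied to the concave function $\log$, this is at most $\log \E_{S_1}[d_{S_1}]$.

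The key remaining point is to identify $\E_{S_1}[d_{S_1}]$ with $\bar d$, the average degree in the complement of $G$. Here $S_1$ is degree-uniform \emph{with respect to $G$}, so $\Pr[S_1 = v] = \deg_G(v) / (2|E|)$, and thus $\E_{S_1}[d_{S_1}] = \sum_v \frac{\deg_G(v)}{2|E|} d_v$. This is a weighted average of the complement-degrees $d_v$, weighted by $G$-degrees, and one must check it does not exceed the \emph{unweighted} average $\bar d = \frac{1}{N}\sum_v d_v$. In general a weighted average need not be below the unweighted one; the relevant structural fact is that vertices with large $d_v$ (many missing edges) have correspondingly small $\deg_G(v)$, since $\deg_G(v) + d_v = N - 1$. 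So $\sum_v \deg_G(v) d_v = \sum_v (N - 1 - d_v) d_v = (N-1)\sum_v d_v - \sum_v d_v^2$, and using $\sum_v d_v^2 \ge \frac{1}{N}(\sum_v d_v)^2$ together with $2|E| = \sum_v \deg_G(v) = N(N-1) - \sum_v d_v$, a short computation shows $\E_{S_1}[d_{S_1}] \le \frac{1}{N}\sum_v d_v = \bar d$. Hence $H(f_{S_1}(S_2)\mid S_1) \le \log \bar d$, and by the identical argument applied with the roles tracked through the (also degree-uniform) variable $S_2$, the third term $H(f_{S_2}(S_3) \mid S_1, f_{S_1}(S_2)) \le H(f_{S_2}(S_3) \mid S_2) \le \log \bar d$ as well. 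Summing the three bounds gives $H(B) \le \log N + 2\log \bar d$.

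I expect the main obstacle to be precisely the comparison of the degree-weighted average of $d_v$ with the plain average $\bar d$ — making sure the conditioning in the chain rule is set up so that each $f$-term is controlled by the degree-uniform marginal of the right vertex variable, and then pushing the weighted-versus-unweighted inequality through cleanly. The entropy bookkeeping (dropping conditioning only increases entropy, Jensen for $\log$) is routine; the one genuine inequality that must be verified is that $\sum_v \deg_G(v) d_v \le \frac{2|E|}{N}\sum_v d_v$, which follows from $\deg_G(v) = N-1-d_v$ and Cauchy--Schwarz as sketched above.
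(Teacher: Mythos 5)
Your proof is correct and follows essentially the same structure as the paper's: chain rule for entropy, $\log N$ for the first term, dropping conditioning and using degree-uniformity of $S_1$ and $S_2$ to bound the remaining two terms by $\log \bar d$ each. The only variation is in the final estimation step: the paper applies Jensen's inequality once to the concave function $g(d)=(N-1-d)\log d$ (a nonnegative combination of the concave $\log d$ and $-d\log d$), which absorbs the weighted-versus-unweighted subtlety in one stroke, whereas you apply Jensen to $\log$ alone and then separately verify $\E_{S_1}[d_{S_1}]\le \bar d$ via Cauchy--Schwarz; both are valid, and your version makes the comparison of the degree-weighted average of $d_v$ with the plain average $\bar d$ more explicit.
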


\begin{proof}
We can decompose the random variable $B$ into $B_1 = s_1$, $B_2 =
f_{s_1}(s_2)$ and $B_3 = f_{s_2}(s_3)$. Again, using the Chain Rule
for entropy we obtain that $$H(B) = H(B_1) + H(B_2 | B_1) + H(B_3 |
B_2, B_1)$$

Since $S_2$ is a deterministic function of the random variables $B_2$ 
and $B_1$, we get $$H(B_3 | B_2, B_1) = H(B_3 | B_2, B_1, S_2) 
\leq H(B_3 | S_2)$$ 
We can upper bound $H(B_1)$ by $\log N$, and 
$$H(B_2 | B_1) = \sum_{s_1} Pr[S_1 = s_1] H(B_2 | S_1 = s_1) 
\leq \sum_{s_1} Pr[S_1 = s_1] \log d_{s_1}$$ 
Using Claim~\ref{claim:unif}, this is 
$$H(B_2 | B_1) = \sum_{s_1} \frac{N - 1 - d_{s_1}}{2 |E|} 
\log d_{s_1} \leq \sum_{s_1} \frac{N-1-\bar{d}}{2|E|} \log \bar{d}=
\log \bar{d},$$
where here we have used Jensen's Inequality and the concavity of 
the functions $\log x$ and $-x \log x$. 
An identical bound holds also for $H(B_3 | S_2)$ 
again using Claim~\ref{claim:unif} and thus
we get $H(B) \leq \log N + 2 \log \bar{d}$.
\end{proof}

We can apply Lemma~\ref{lemma:relation} and the bounds in
Lemma~\ref{lemma:upper} and Claim~\ref{claim:lower} to obtain the
following theorem:

\begin{theorem}
\label{t38}
Let $G = (V, E)$ be a graph on $N$ vertices that can be covered by 
disjoint induced matchings of size $r \geq 2$. 
Then the number of missing edges satisfies
$${ N \choose 2 } - |E| \geq (\frac{1}{2 \sqrt 2}-o(1)) 
r^{1/2} N^{3/2}.$$
\end{theorem}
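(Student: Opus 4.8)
The plan is to combine the three results already in hand—Claim~\ref{claim:lower} giving $H(A) = \log|E| + \log(r-1)$, Lemma~\ref{lemma:relation} giving $H(B) \geq H(A)$, and Lemma~\ref{lemma:upper} giving $H(B) \leq \log N + 2\log\bar d$—into a single chain of inequalities and then translate the resulting bound on $\bar d$ into a bound on the number of missing edges. Concretely, chaining the three facts yields
$$\log|E| + \log(r-1) \;=\; H(A) \;\leq\; H(B) \;\leq\; \log N + 2\log\bar d,$$
so that $(r-1)|E| \leq N \bar d^{\,2}$, i.e. $\bar d \geq \sqrt{(r-1)|E|/N}$.

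Next I would record that the number of missing edges is exactly half the sum of complement-degrees, so ${N\choose 2} - |E| = \tfrac12 N\bar d \geq \tfrac12 N \sqrt{(r-1)|E|/N} = \tfrac12\sqrt{(r-1)\,N\,|E|}$. The only remaining point is to replace $|E|$ on the right-hand side by something of the form $(1-o(1)){N\choose 2}$, which is legitimate because if $|E| \le (1-\eta)\binom N2$ for some constant $\eta>0$ then the conclusion of the theorem holds trivially (the number of missing edges is already $\Omega(N^2)$, which dominates $r^{1/2}N^{3/2}$ since $r \le N$), whereas if $|E| = (1-o(1))\binom N2$ then $|E| \ge (1-o(1))\tfrac{N^2}{2}$ and we may substitute to get ${N\choose 2}-|E| \geq (1-o(1))\tfrac12\sqrt{(r-1)\cdot N\cdot \tfrac{N^2}{2}} = (\tfrac{1}{2\sqrt2}-o(1))\,r^{1/2}N^{3/2}$, using also $\sqrt{r-1} = (1-o(1))\sqrt r$ (one should note this $o(1)$ is uniform only when $r$ is allowed to grow; for fixed $r\ge 2$ the constant is simply slightly worse but still $\Omega(r^{1/2}N^{3/2})$, and absorbing it into the stated $o(1)$ in $N$ is fine).

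I do not anticipate a genuine obstacle here: all the substantive work—the reconstruction/injectivity argument showing $A$ is a function of $B$, the degree-uniformity of $S_1,S_2$, and the Jensen step bounding $H(B_2\mid B_1)$ by $\log\bar d$—has already been carried out in the lemmas above, and what remains is purely the bookkeeping of assembling the inequalities and exponentiating. The one place to be slightly careful is the case split on whether $|E|$ is close to $\binom N2$, since Lemma~\ref{lemma:upper} is phrased in terms of $\bar d$ and one wants the final bound in terms of $N$ alone; handling the degenerate case separately (where the bound is trivial) is the clean way to do this.

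\begin{proof}
By Claim~\ref{claim:lower}, Lemma~\ref{lemma:relation}, and Lemma~\ref{lemma:upper},
$$\log |E| + \log(r-1) \;=\; H(A) \;\leq\; H(B) \;\leq\; \log N + 2\log \bar d,$$
so that $(r-1)\,|E| \leq N\,\bar d^{\,2}$, i.e. $\bar d \geq \sqrt{(r-1)\,|E|/N}$. Since the number of missing edges equals half the sum of the complement-degrees,
$${N\choose 2} - |E| \;=\; \tfrac12 N \bar d \;\geq\; \tfrac12 \sqrt{(r-1)\,N\,|E|}.$$
If $|E| \leq \tfrac34 {N\choose 2}$ then the left-hand side is already $\Omega(N^2) = \Omega(r^{1/2}N^{3/2})$ (as $r\le N$) and the claimed bound holds. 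Otherwise $|E| = (1-o(1))\tfrac{N^2}{2}$, and substituting this together with $\sqrt{r-1} = (1-o(1))\sqrt r$ gives
$${N\choose 2} - |E| \;\geq\; \tfrac12\sqrt{(r-1)\cdot N\cdot (1-o(1))\tfrac{N^2}{2}} \;=\; \Bigl(\tfrac{1}{2\sqrt 2}-o(1)\Bigr) r^{1/2} N^{3/2},$$
as desired.
\end{proof}
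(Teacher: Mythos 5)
Your assembly of the three entropy ingredients is exactly the paper's intended route: chaining Claim~\ref{claim:lower}, Lemma~\ref{lemma:relation}, and Lemma~\ref{lemma:upper} gives $(r-1)|E|\le N\bar d^{\,2}$, and converting via $\binom N2-|E|=\tfrac12 N\bar d$ correctly yields $\binom N2-|E|\ge\tfrac12\sqrt{(r-1)N|E|}$. The difficulty is entirely in your closing step, and it is a genuine gap.

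First, the case split at $|E|\le\tfrac34\binom N2$ does not do what you need. In that case you get $\binom N2-|E|\ge\tfrac14\binom N2\approx N^2/8$, but the stated bound can be as large as $(\tfrac{1}{2\sqrt2}-o(1))\sqrt{N/2}\,N^{3/2}\approx N^2/4$ when $r$ is near its maximum $N/2$, so ``$\Omega(N^2)$ dominates'' is not enough once the constant $\tfrac{1}{2\sqrt2}$ is taken seriously. In the complementary case, $|E|>\tfrac34\binom N2$ does \emph{not} imply $|E|=(1-o(1))\binom N2$; you assume precisely what the split was supposed to establish. A correct split must take the threshold to depend on the target loss (for each $\eps>0$, split on $\binom N2-|E|\ge\eps N^2$), and even that only closes the argument when $r=o(N)$: for $r=\Theta(N)$ the entropy chain, with $|E|$ genuinely a constant factor below $\binom N2$, gives a constant strictly smaller than $\tfrac{1}{2\sqrt2}$.

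Second, $\sqrt{r-1}=(1-o(1))\sqrt r$ only holds as $r\to\infty$. You acknowledge this but then claim the discrepancy ``absorbs into the stated $o(1)$ in $N$''; it does not, because for fixed $r$ (e.g.\ $r=2$) the factor $\sqrt{(r-1)/r}$ is a constant below $1$ and does not vanish as $N\to\infty$. Even in the most favorable regime $|E|=(1-o(1))\binom N2$, the entropy chain yields only $(\tfrac{1}{2\sqrt2}-o(1))\sqrt{r-1}\,N^{3/2}$, which for bounded $r$ is strictly weaker than the statement.

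These are exactly the reasons the paper follows the entropy discussion with a direct counting argument: it shows the minimum complement-degree $d$ satisfies $\binom d2\ge(r-1)(N-1-d)$, and notes this ``implies Theorem~\ref{t38}.'' That inequality gives $d\gtrsim\sqrt{2(r-1)N}$ and hence $\binom N2-|E|\ge\tfrac12 Nd\gtrsim\tfrac{1}{\sqrt2}\sqrt{r-1}\,N^{3/2}$, which exceeds $(\tfrac{1}{2\sqrt2}-o(1))\sqrt r\,N^{3/2}$ uniformly over $2\le r\le N/2$. So your proof matches the paper's \emph{sketch}, but a proof that actually delivers the stated constant for all admissible $r$ needs the counting argument, not the entropy chain alone.
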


We can apply a nearly identical argument in the case in 
which $G$ is a bipartite graph:

\begin{theorem}
\label{t99}
Let $G = (U, V, E)$ be a bipartite graph that can be covered 
by disjoint induced matchings of size $r \geq 3$. Then the number 
of missing edges satisfies 
$$|U| \times |V| - |E| \geq \Omega( r^{2/3} |U|^{2/3} |V|^{2/3}).$$
\end{theorem}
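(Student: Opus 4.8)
The plan is to mimic the entropy argument of Theorem~\ref{t38}, but with three ``center'' vertices instead of two, in order to exploit the extra edge guaranteed by the hypothesis $r\ge 3$. Concretely, define a random variable $A=(e_1,e_2,e_3)$ by choosing a matching $M_i$ uniformly at random and then an ordered triple of distinct edges from $M_i$; as in Claim~\ref{claim:lower}, since all matchings have the same size $r$ the first edge is uniform over $E$, the second uniform over the remaining $r-1$, and the third over the remaining $r-2$, so $H(A)=\log|E|+\log(r-1)+\log(r-2)=\log|E|+2\log r-o(1)$. Writing $e_1=(w_1,w_1')$, $e_2=(w_2,w_2')$, $e_3=(w_3,w_3')$, pick independent fair coins to choose one endpoint $s_j$ from each $e_j$, and let $s_1'$ be the endpoint of $e_1$ not chosen. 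The reconstruction variable is $B=\bigl[s_1,\ f_{s_1}(s_2),\ f_{s_2}(s_3),\ f_{s_3}(s_1')\bigr]$, where $f_v\colon D_v\to[d_v]$ enumerates the non-neighbors of $v$ as before.

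First I would check that $A$ is a deterministic function of $B$, exactly as in Lemma~\ref{lemma:relation}: from $s_1$ and $f_{s_1}(s_2)$ recover $s_2$; from $s_2$ and $f_{s_2}(s_3)$ recover $s_3$; from $s_3$ and $f_{s_3}(s_1')$ recover $s_1'$; now $e_1=(s_1,s_1')$ is determined, hence so is the matching $M_i$, and then $e_2$ is the unique edge of $M_i$ at $s_2$ and $e_3$ the unique edge of $M_i$ at $s_3$. (This is where induced-ness and disjointness of the matchings are used, just as in the two-edge case.) The Chain Rule then gives $H(B)\ge H(A)$.

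Next I would bound $H(B)$ from above. Decompose $B$ into its four coordinates and apply the Chain Rule: $H(B)=H(B_1)+H(B_2\mid B_1)+H(B_3\mid B_1,B_2)+H(B_4\mid B_1,B_2,B_3)$. As in Lemma~\ref{lemma:upper}, $H(B_1)\le\log N$, and since $S_2$ is a function of $(B_1,B_2)$ and $S_3$ a function of $(B_1,B_2,B_3)$, we get $H(B_3\mid B_1,B_2)\le H(B_3\mid S_2)$ and $H(B_4\mid B_1,B_2,B_3)\le H(B_4\mid S_3)$. Here one shows (by the same three sampling re-orderings used in Claim~\ref{claim:unif}) that $S_1$, $S_2$, and $S_3$ are each degree-uniform in the complement, so each of the three conditional terms $H(B_j\mid S_{j-1})$ is at most $\E_{S}[\log d_S]\le\log\bar d$ by Jensen and concavity of $\log$, where $\bar d$ is the average degree of the complement of $G$ restricted to, say, $U$ (and symmetrically for $V$); in the bipartite setting one tracks the two sides separately, using $|V|$ in place of $N$ in the $H(B_1)$ term and splitting the degree averages by side. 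Combining, $\log|E|+2\log r-o(1)\le\log(|U|\cdot|V|)+\ (\text{three }\log\bar d\text{ terms})$, which after exponentiating rearranges to $\bar d^{\,3}\gtrsim r^2\,|U|\,|V|\,/\,(|U|\cdot|V|)\cdot(\ldots)$ — i.e.\ the total number $M$ of missing edges, being $\tfrac12\sum_v d_v\asymp \bar d\cdot(|U|+|V|)$, satisfies $M\gtrsim r^{2/3}(|U||V|)^{2/3}$.

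The main obstacle I anticipate is the bipartite bookkeeping in the upper bound: unlike the balanced symmetric case of Theorem~\ref{t38}, the three chosen centers $s_1,s_2,s_3$ do not all live on the same side of the bipartition (for three distinct edges of an induced matching in $K_{U,V}$, consecutive ones alternate sides), so the degree-uniform distributions and the quantities $d_v$ refer to different vertex classes, and one must set up the function $f_v$ and the Jensen step separately for $U$ and for $V$ and then combine the two average-degree bounds to extract the symmetric exponent $2/3$. Getting the exponents to land on $r^{2/3}|U|^{2/3}|V|^{2/3}$ rather than something lopsided requires choosing how many of the three coins land on each side — and this is exactly why the hypothesis is $r\ge 3$ (three edges, hence a center on each side plus one more) rather than $r\ge 2$; I would not expect to recover the sharp constant, only the $\Omega(\cdot)$.
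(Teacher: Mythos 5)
Your proposal mirrors the paper's own (sketched) proof exactly: three edges for $A'$, a length-three path through non-edges of $K_{U,V}$ for $B'$, reconstructibility, and the Chain Rule plus Jensen to close. The one imprecision is the phrase ``independent fair coins'' — in the bipartite setting the three endpoint choices are not independent, since the path $s_1\!-\!s_2\!-\!s_3\!-\!s_1'$ must alternate sides of the bipartition for each consecutive pair to be a missing edge in $U\times V$ (so that $f_v$ makes sense and $d_v$ is small); you do flag this in your closing paragraph, and indeed once $s_1$'s side is chosen the sides of $s_2,s_3,s_1'$ are forced, leaving effectively one free coin — after which the inequality $|E|(r-1)(r-2)\lesssim |U|\,\bar d_U^{\,2}\,\bar d_V = M^3/(|U||V|)$ (with $M$ the number of missing edges and $\bar d_U=M/|U|$, $\bar d_V=M/|V|$) yields $M\gtrsim r^{2/3}|U|^{2/3}|V|^{2/3}$ as you say.
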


To prove this result, we choose $A'$ to be three distinct edges from
the matching $M_i$, and we use a length three path through pairs in
$U \times V$ that are not in $E$ to define the corresponding random
variable $B'$. Again, the proof uses information theoretic inequalities
and the fact that (if appropriately defined) $A'$ can be reconstructed
as a deterministic function of $B'$. It is worth noting
that for a bipartite
graph with $|U|=|V|=N$ and induced matchings of size $2$, there is
a simple construction missing only $N$ edges.
\vspace{0.5cm}

\noindent
We can also give a direct counting argument, which is somewhat stronger,
as it yields a lower bound on the minimum degree in the graph of 
missing edges. This counting argument proceeds by  estimating
the size of an appropriately defined set in two ways.
Let $G = (V, E)$ be an edge disjoint union of induced matchings 
$M_1, M_2, ... M_t$ each of size $r$. 
Again, let $d_v$ be the degree of $v$ in the complement of $G$. 

Set $$\cF = \{ (v, e) | v \in V, e \in E, v \notin e, \exists M_i \mbox{ s.t. } e \in M_i \mbox{ and } v \mbox{ is covered by } M_i \}$$

\begin{lemma}
$|\cF| \leq \sum_v \min \Big ( {d_v \choose 2}, (N -1 - d_v) (r-1) \Big )$
\end{lemma}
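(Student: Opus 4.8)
The plan is to bound $|\cF|$ by counting, for each fixed vertex $v$, the number of pairs $(v,e)\in\cF$ in two different ways and taking the minimum. Fix $v\in V$. Any pair $(v,e)\in\cF$ requires an induced matching $M_i$ containing $e$ and covering $v$; write $e=(a,b)$. First I would argue that the edge $e$ must be a non-edge of $G$ at both of its endpoints with respect to $v$ in the following sense: since $M_i$ covers $v$, there is an edge $(v,u)\in M_i$ for some $u$, and because $M_i$ is an \emph{induced} matching, neither $a$ nor $b$ can be adjacent in $G$ to $v$ (otherwise the three vertices $v,a,b$ together with the matched partners would violate the induced condition — more precisely, $v\notin e$ and $e\in M_i$, so $va$ and $vb$ cannot be edges of $G$, since an induced matching spans no edges other than its own). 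Hence both $a$ and $b$ lie in $D_v$, the set of non-neighbors of $v$, so $e$ is one of the $\binom{d_v}{2}$ pairs inside $D_v$. This gives the first bound $\#\{e : (v,e)\in\cF\}\le \binom{d_v}{2}$.

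For the second bound, I would count instead over the matchings covering $v$. The vertex $v$ is covered by some set of matchings; in each such matching $M_i$ there is a unique edge $(v,u_i)$ incident to $v$, and the edges $e$ with $(v,e)\in\cF$ coming from $M_i$ are precisely the other edges of $M_i$, of which there are at most $r-1$. The key point is that the matchings $M_i$ covering $v$ via distinct partners $u_i$ are indexed by distinct non-neighbors: the partner $u_i$ satisfies $(v,u_i)\in E$... wait — here I must be careful, since if $(v,u)\in E$ then $G$ restricted is fine, but actually the relevant observation is different. The cleaner route: each matching covering $v$ uses up an edge incident to $v$ in $G$, but we want to relate this to $d_v$. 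Instead, observe that an edge $e=(a,b)\in M_i$ with $(v,e)\in\cF$ forces $a,b\in D_v$ as shown above; moreover for a \emph{fixed} non-neighbor, say we track $a\in D_v$, the pairs $(v,e)$ with $a\in e$ number at most $\ldots$ — in fact the simplest correct count is: the number of such pairs $(v,e)$ is at most the number of matchings covering $v$ times $(r-1)$, and the number of matchings covering $v$ is at most $N-1-d_v$, since distinct matchings covering $v$ must use distinct $G$-neighbors of $v$ as the partner of $v$ (each edge of $G$ lies in at most one matching, as the $M_i$ are edge-disjoint, so the $(v,u_i)$ are distinct, and $v$ has $N-1-d_v$ neighbors in $G$). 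This yields $\#\{e : (v,e)\in\cF\}\le (N-1-d_v)(r-1)$.

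Combining, for each $v$ the number of edges $e$ with $(v,e)\in\cF$ is at most $\min\bigl(\binom{d_v}{2},\,(N-1-d_v)(r-1)\bigr)$, and summing over $v\in V$ gives $|\cF|\le\sum_v\min\bigl(\binom{d_v}{2},\,(N-1-d_v)(r-1)\bigr)$, as claimed.

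The main obstacle I anticipate is getting the induced-matching argument exactly right in both bounds: one must carefully use that $M_i$ is \emph{induced} (not merely a matching) to conclude $a,b\in D_v$ for the first bound, and use edge-disjointness of the $M_i$ together with the fact that $v$ has only $N-1-d_v$ neighbors for the second bound. Neither step is deep, but conflating "matching" with "induced matching," or miscounting the partners of $v$, would break the estimate; so the writeup should spell out that $v\notin e$, $e\in M_i$, and $M_i$ induced together imply $va,vb\notin E$.
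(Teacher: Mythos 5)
Your proof is correct and takes essentially the same approach as the paper: for each vertex $v$, count the pairs $(v,e)\in\cF$ once via the induced-matching constraint (both endpoints of $e$ must be non-neighbors of $v$, giving the $\binom{d_v}{2}$ bound) and once via the matchings covering $v$ (at most $N-1-d_v$ of them by edge-disjointness, each contributing at most $r-1$ edges), then take the minimum and sum over $v$. The paper states the second count as an exact equality — since $G$ is the edge-disjoint union of the $M_i$, each of $v$'s $N-1-d_v$ incident edges lies in exactly one matching, so $v$ is covered by precisely that many matchings — but your ``at most'' is of course sufficient for the claimed upper bound.
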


\begin{proof}
For each $v \in V$, $v$ appears in precisely 
$(N - 1 - d_v) (r-1)$ elements
of $\cF$ since $v$ belongs to exactly $(N -1 -d_v)$ matchings and for
each such matching there are exactly $r-1$ choices of an edge (in the
matching) that is not incident to $v$.

Alternatively, each $v \in V$ also appears in at most ${d_v \choose 2}$ 
elements of $\cF$: if $(v, e) \in \cF$ then $v$ must not be a 
neighbor of each endpoint of $e$ because the matching is induced. As
there are at most ${d_v \choose 2}$ choices of pairs of vertices 
that are not  neighbors of $v$, the desired result follows. 
\end{proof}

We can also compute the size of $\cF$ exactly:

\begin{lemma}
$|\cF| = \sum_v (r-1) (N -1 - d_v)$
\end{lemma}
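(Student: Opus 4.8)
The plan is to compute $|\cF|$ directly by counting, for each vertex $v$, exactly how many pairs $(v,e)$ lie in $\cF$. This is essentially the first half of the previous lemma's proof, repeated here to extract an equality rather than an inequality. The key observation is that membership $(v,e)\in\cF$ requires $v\notin e$ together with the existence of a matching $M_i$ that contains $e$ and simultaneously covers $v$; since the matchings $M_1,\ldots,M_t$ are pairwise edge-disjoint and together cover $E$, each edge $e$ belongs to a \emph{unique} matching $M_i$, so the quantifier ``$\exists M_i$'' is really a condition on that single matching.

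First I would fix $v\in V$ and count the edges $e$ with $(v,e)\in\cF$. The edge $e$ lies in its unique matching $M_{i(e)}$, and the condition is precisely that $M_{i(e)}$ covers $v$ (i.e. some edge of $M_{i(e)}$ is incident to $v$) and that $e$ itself is not incident to $v$. So I would instead organize the count by first choosing a matching $M_i$ that covers $v$, and then choosing an edge $e\in M_i$ not incident to $v$. The number of matchings covering $v$ is the number of edges of $G$ incident to $v$, which is $N-1-d_v$ (here $v$ has $N-1$ potential neighbours and misses $d_v$ of them, and each incident edge lies in exactly one matching, which then covers $v$). For each such matching $M_i$, exactly one of its $r$ edges is incident to $v$ — it cannot be more than one since $M_i$ is a matching, and it is at least one since $M_i$ covers $v$ — so there are exactly $r-1$ valid choices of $e$. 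Multiplying and summing over $v$ gives $|\cF| = \sum_v (r-1)(N-1-d_v)$.

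I do not anticipate a genuine obstacle here: the statement is an exact count and the argument is a direct double-counting, essentially already carried out (as an inequality in one direction) in the proof of the preceding lemma. The one point requiring a little care is the claim that each matching covering $v$ contributes exactly $r-1$ (not at most $r-1$) edges to the count, which uses both that $M_i$ is a matching (so at most one edge hits $v$) and that $M_i$ covers $v$ (so at least one does); and that distinct matchings contribute disjoint sets of pairs $(v,e)$, which holds because the matchings are edge-disjoint so the edge $e$ determines $M_i$. Combining this lemma with the upper bound of the previous lemma then yields $\sum_v (r-1)(N-1-d_v) \leq \sum_v \min\big(\binom{d_v}{2},(N-1-d_v)(r-1)\big)$, which forces $\binom{d_v}{2} \geq (N-1-d_v)(r-1)$ for every $v$ and hence a lower bound on each $d_v$ of order $\sqrt{rN}$ — the promised minimum-degree bound for the graph of missing edges.
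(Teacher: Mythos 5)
Your proof is correct, but it takes a dual route from the paper's. You count by the vertex coordinate: fix $v$, observe that $(v,e)\in\cF$ precisely when the unique matching containing $e$ also covers $v$ and $e$ is not incident to $v$, conclude that $v$ lies in exactly $(r-1)(N-1-d_v)$ pairs (the $N-1-d_v$ matchings through $v$ each contribute $r-1$ non-incident edges, with no overcounting since $e$ determines its matching), and sum over $v$. As you yourself note, this per-vertex count appears verbatim in the proof of the preceding lemma --- the statement that ``$v$ appears in precisely $(N-1-d_v)(r-1)$ elements of $\cF$'' already implies the equality by summation --- so your argument is essentially a restatement. The paper instead counts by the edge coordinate: each edge $e$, in its unique matching $M_i$, appears in a pair $(v,e)\in\cF$ for each of the $2(r-1)$ vertices covered by $M_i$ but not belonging to $e$, so $|\cF| = 2(r-1)|E|$; substituting $|E| = {N \choose 2} - \frac{1}{2}\sum_v d_v$ and simplifying yields $\sum_v (r-1)(N-1-d_v)$. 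Both computations are elementary and correct; the paper's per-edge version is independent of the earlier per-vertex claim and so functions as a sanity check that the two sides of the double count agree, whereas your version is the more direct but slightly redundant path.
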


\begin{proof}
For each edge $e \in E$, let $M_i$ be the corresponding matching that
covers $e$. There are exactly $2 (r-1)$ choices for a vertex (covered by
$M_i$) but not incident to $e$. Hence each edge $e$ appears in exactly $2
(r-1)$ elements of $\cF$. Thus
$$
|\cF| = 2 (r-1) \Big [ {N \choose 2} - \frac{1}{2} \sum_v d_v \Big ] 
= (r-1) \Big [N (N-1) - \sum_v d_v \Big ] = \sum_v (r-1) (N -1 - d_v)
$$
\end{proof}

Combining the two estimates for $|\cF|$ we conclude that
$$
\sum_v (r-1) (N -1 - d_v)
\leq \sum_v \min \Big ( {d_v \choose 2}, (N -1 - d_v) (r-1) \Big ).
$$
It follows that for every $v$, the minimum term in the right hand side
should be  $(N -1 - d_v) (r-1)$, since otherwise the inequality cannot 
hold. Therefore we have proved the following, which implies Theorem
\ref{t38}.

\begin{theorem}
If $G = (V, E)$ is a graph on $N$ vertices that is the disjoint 
union of induced matchings of size $r$, then the minimum degree $d$ 
in the complement of $G$ satisfies $${d \choose 2} \geq (r-1)(N-1-d).$$
\end{theorem}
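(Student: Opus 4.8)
The plan is a double-counting argument, essentially the one foreshadowed by the set $\cF$ above, and for the minimum-degree statement it is cleanest to fix a single vertex. Let $v$ be a vertex whose degree in the complement of $G$ is the minimum value $d$. Call an edge $e\in E$ \emph{remote from $v$} if $v\notin e$ but $v$ is covered by the (unique) induced matching $M_i$ that contains $e$. I will count the remote edges of $v$ in two ways.

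First, the exact count. Since the matchings $M_1,\dots,M_t$ are edge-disjoint and cover $E$, every edge of $G$ incident to $v$ lies in a unique $M_i$, and distinct edges at $v$ lie in distinct matchings (a matching meets $v$ in at most one edge). As $v$ has exactly $N-1-d$ neighbors in $G$, it is therefore covered by exactly $N-1-d$ of the matchings. Each such matching has $r$ edges, exactly one of which touches $v$, so it contributes exactly $r-1$ remote edges. Hence the number of edges remote from $v$ equals $(r-1)(N-1-d)$.

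Second, the upper bound, which is where the induced hypothesis enters. Suppose $e=\{y,z\}$ is remote from $v$ and lies in $M_i$, and let $\{v,u\}$ be the edge of $M_i$ incident to $v$. If $v$ were adjacent in $G$ to $y$ (say), then $\{v,y\}$ would be an edge of $G$ lying inside $V(M_i)$ but not equal to either matching edge $\{v,u\}$ or $\{y,z\}$, contradicting that $M_i$ is an \emph{induced} matching. So both endpoints of every remote edge are non-neighbors of $v$ (and distinct from $v$ and from each other); since $v$ has only $d$ non-neighbors and the map sending a remote edge to its unordered endpoint pair is injective, there are at most $\binom{d}{2}$ edges remote from $v$.

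Comparing the two counts gives $(r-1)(N-1-d)\le\binom d2$, which is exactly the claim. (If one prefers the global phrasing: running the same two counts for every $v$ shows $|\cF|=\sum_v(r-1)(N-1-d_v)$ while each vertex's contribution is also at most $\binom{d_v}{2}$, so each contribution equals $\min\bigl(\binom{d_v}{2},(r-1)(N-1-d_v)\bigr)$, forcing $(r-1)(N-1-d_v)\le\binom{d_v}{2}$ termwise, and in particular for the minimum-degree vertex; this is how Theorem~\ref{t38} is recovered.) There is no serious obstacle; the only points to get right are the bookkeeping observation that a matching covering $v$ meets $v$ in exactly one edge (so it yields $r-1$, not $r$, remote edges and accounts for a \emph{distinct} matching per neighbor of $v$) and the one-line use of "induced" to confine each remote edge's endpoints to the non-neighborhood of $v$.
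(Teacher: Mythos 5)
Your proof is correct and takes essentially the same approach as the paper: both count the pairs $(v,e)$ with $v$ covered by the matching containing $e$ but $v\notin e$, bound this from above by the induced-matching property (both endpoints of a remote edge must be non-neighbors of $v$) and compute it exactly via the $N-1-d_v$ matchings covering $v$. Your per-vertex phrasing is a slightly cleaner way to extract the termwise inequality that the paper derives by summing over all $v$ and noting the inequality forces each term's minimum to be achieved.
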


The assertion of Theorem \ref{t99} can be also proved by a 
counting argument. We omit the details.

\section{Applications}

\subsection{Shared Communication Channels}

We apply our results to significantly improve the application in
\cite{BLM} of communicating over a shared directional multichannel.
Roughly, when communicating over a shared channel we want the edges
(corresponding to messages sent in some time step called a round) to
form an induced matching. Otherwise, a receiver will hear messages sent
from two different sources and the messages will appear garbled. Birk,
Linial and Meshulam construct graphs with positive density that can be
covered by roughly $\frac{N^2}{24 r}$ induced matchings where $r=(\log
N)^{\Omega(\log \log N/ (\log \log \log N)^2)}$.  The authors then
use these graphs to design a communication protocol for $N$ stations
over a shared directional multi-channel where the round complexity of
this protocol is $O(\frac{N^2}{r})$.  This is a slightly better than
poly-logarithmic improvement over the naive protocol for bus-based
architectures.

We can use 
our constructions to achieve a round complexity of 
$O(N^{2 - \delta})$ over a shared directional multi-channel.
This is the first such protocol that provides
a polynomial improvement over the naive protocol. 
We accomplish this using just one transmitter and two 
receivers per station. This corresponds to a 
partition of the edges of a complete bipartite graph into 
two graphs each of which can be decomposed into a small number of
induced matchings. If we allow $C = C(\epsilon)$ 
receivers per station, we can achieve a round complexity 
that is $O(N^{1 + \epsilon})$ for any $\epsilon > 0$ (here $N$ is
a trivial lower bound).
Hence, while previous protocols required a nearly 
quadratic number of rounds with a constant number of receivers
per station, our protocols require
only a nearly-linear number of rounds. 
 
Motivated by the application to communication over a shared channel, 
Meshulam \cite{Me} conjectured that any graph on $N$ vertices
with positive density 
cannot be covered by  $O(N^{2 - \delta})$ induced matchings.
The constructions presented in Section~\ref{sec:geom} 
and in Section~\ref{sec:ecc} disprove this conjecture in a strong sense. 

First we explain the model considered in \cite{BLM}. Roughly, the goal is
to design a good communication protocol using a small number of shared
communication channels. More precisely, suppose we have $N$ stations,
and each wants to send a (distinct) message to every other station. We
further assume that each message is (roughly) the same size. In this
context, it is often prohibitively expensive to build a point-to-point
communication channel from each station to every other one. Often, the
proposed solution is to use some form of a shared communication channel.
Indeed, the standard bus-based architecture connects all pairs of stations
using a single connection in such a way that only one message can be
sent on the channel per time step and hence a total of $N^2$ rounds are
needed to send all messages.

There are other architectures that can be implemented cheaply in hardware
and can accomplish this task in a smaller number of rounds. One such
architecture is the shared directional multi-channel.  The combinatorial
abstraction is that we imagine the communication graph as a complete
bipartite graph $K_{N, N}$ (with $N$ vertices on the left, representing
the transmitters of the stations, and $N$ vertices on the right,
representing the receivers).  A directed multi-channel allows us to
partition $K_{N, N}$ into $C$ graphs $G_1, G_2, ... G_C$.  These graphs
correspond to allocating $c$ receivers to each station.  For each graph
$G_i$, in each round we can exchange  all messages corresponding to
the edges in some {\em induced} matching in $G_i$ in one time step.
These matchings are required to be induced because otherwise messages
would interfere in the underlying hardware.

Thus the problem of designing a communication protocol for this 
architecture that completes in a small number of rounds and does not use 
too many transmitters and receivers per station is  
exactly 
the problem of covering all the edges of a complete bipartite 
graph (using at most $C$ graphs) so that the number of 
induced matchings needed to cover the edges in each graph is small.
The number $C$ represents the number of receivers that each 
station must be equipped with, assuming it has only one
transmitter, and so our goal is not only to minimize the 
number of rounds, but also to do so for a small value of $C$.  

\begin{itemize}

\item For $C = 2$, we give a protocol 
that completes in $O(N^{2 - \delta})$ rounds for $\delta > 0.058$ and

\item For any $\eps > 0$, we show that there is a 
$C = C(\eps) = 2^{O(\frac{1}{\eps})}$ so that there 
is a communication protocol that completes in $O(N^{1+\eps})$ rounds

\end{itemize}

Let $K_{N, N}$ be the complete bipartite graph with $N$ vertices 
on the left and $N$  on the right. 

\begin{theorem}
There is a partition of the edges of $K_{N, N}$ into two graphs
$G_1$ and $G_2$ so that each of these graphs can be covered by 
at most $O(N^{2 - \delta})$ induced matchings, for $\delta > 0.058$. 
\end{theorem}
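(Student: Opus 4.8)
The plan is to derive this statement as an immediate corollary of Theorem~\ref{thm:ecccon} together with the closure of the relevant graph class under complementation. First I would invoke Theorem~\ref{thm:ecccon} with the explicit parameters $C = 34$ and $d = 0.19\,n$ (after checking the hypothesis $d/n = 0.19 \geq 2/(C-1) = 2/33 \approx 0.0606$, which holds comfortably). Plugging into the formula $f = 2 - \frac{1 - H(d/n)}{\log_2 C} + o(1)$ with $H(0.19)$ computed numerically and $\log_2 34 \approx 5.09$ yields $f < 1.942$, so the construction gives a graph $G$ on $M$ vertices missing at most $M^{e}$ edges (with $e < 1.942$ as well), whose edge set decomposes into at most $M^{f} = O(M^{2-\delta})$ disjoint induced matchings, for some $\delta > 0.058$.

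The second step is to handle the complement. Here the key observation is that $G_2$ should be defined as the complement of $G$ \emph{within} $K_{N,N}$ — but to get a partition of a complete bipartite graph rather than a complete graph, I would follow the recipe spelled out at the end of Section~\ref{sec:ecc}: take $G$ from the error-correcting-code construction on $2N = M$ vertices, let $G_2$ be its complement in $K_{2N}$, note that both $G$ and $G_2$ are edge-disjoint unions of $O((2N)^{2-\delta}) = O(N^{2-\delta})$ induced matchings (for $G_2$ this is exactly the ``number of missing edges'' bound $M^e$ with $e < 1.942$, since each missing edge is a trivial induced matching of size one — or more efficiently, one can observe the complement is sparse and apply the same counting), and then restrict to a balanced bipartition: split the $2N$ vertices arbitrarily into two classes of size $N$, and for $i = 1,2$ let $G'_i$ consist of those edges of $G_i$ with one endpoint in each class. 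Restricting to an induced subgraph preserves the property of being an induced matching, so each $G'_i$ is still covered by $O(N^{2-\delta})$ induced matchings, and $\{G'_1, G'_2\}$ partitions $E(K_{N,N})$.

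The one point that requires a little care — and what I expect to be the only real obstacle — is confirming that $G_2$, the complement, genuinely decomposes into few induced matchings rather than merely being sparse. A graph with $m$ edges trivially decomposes into $m$ induced matchings of size one, and by the missing-edge count $m \leq M^{e}$ with $e < 1.942 < 2 - 0.058$, so this crude bound already suffices; hence there is no real obstruction, just the bookkeeping of verifying that $e < 2 - \delta$ for the \emph{same} $\delta > 0.058$ that works for $f$. Since both $e$ and $f$ are bounded by $1.942$ for the chosen parameters, a single $\delta$ (any value in $(0.058, 0.058+\text{slack})$) works for both graphs simultaneously. Finally I would remark that the passage from $K_{2N}$ to $K_{N,N}$ changes $N$ only by a constant factor, which is absorbed into the $O(\cdot)$, completing the proof.
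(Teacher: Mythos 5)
Your proposal is correct and follows essentially the same route as the paper: invoke Theorem~\ref{thm:ecccon} with $C=34$, $d=0.19n$ to get $e,f<1.942$, take $G_2$ to be the complement of $G$ in $K_{2N}$ covered by singleton induced matchings, and pass to $K_{N,N}$ by restricting each $G_i$ to a balanced bipartition. The bookkeeping you flag (a single $\delta>0.058$ working for both $e$ and $f$) is exactly what the paper's parameter choice is designed to ensure.
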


\begin{proof}
This follows immediately from the construction given at the very end of
Section~\ref{sec:ecc}: we can choose $G'_1$ and $G'_2$ that cover
all edges of $K_{N,N}$, where  $G'_1$ covers all edges of $K_{N,N}$ but at
most $N^{2-\delta}$ and yet it is a union of
at most $N^{2 - \delta}$ induced matchings. The second graph $G'_2$ 
consists of all
these remaining edges. Since it contains at most $N^{2 - \delta}$ 
edges in total we can cover $G'_2$ 
by trivial induced matchings -- one for each edge in $G'_2$. 
The total number of induced matchings in each graph is thus
at most $O(N^{2 - \delta})$. 
\end{proof}

\begin{theorem}~\label{thm:close}
For any $\epsilon > 0$, there is a $C = C(\epsilon) = 
2^{O(\frac{1}{\epsilon})}$ so that the edges of $K_{N, N}$ can 
be partitioned into $G_1, G_2, .. G_C$ and each of these graphs 
can be covered by at most $O(N^{1 + \epsilon})$ induced matchings. 
\end{theorem}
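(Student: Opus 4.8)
The plan is to recycle the geometric construction of Section~\ref{sec:geom}, but instead of centering it at the single ``typical'' squared distance $\mu$, to sweep the center over a \emph{constant} number of values and thereby obtain a genuine \emph{partition} of $E(K_N)$ into few pieces, each still an edge-disjoint union of few induced matchings. Fix a constant $C_0=C_0(\epsilon)$, let $n$ be even and large, put $V=[C_0]^n$, $N=C_0^n$, and for a real parameter $\ell$ let $G^{(\ell)}$ be the graph on $V$ in which $x\sim y$ iff $\bigl|\,\|x-y\|_2^2-\ell\,\bigr|\le n$; thus $G=G^{(\mu)}$.

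The first step is to observe that every lemma of Section~\ref{sec:geom} goes through verbatim for $G^{(\ell)}$, with $\ell$ replacing $\mu$ and $\ell/4$ replacing $\mu/4$ in the definition of the auxiliary sets $V_z$. In the covering lemma the witness $z=\frac{x+y}{2}+w$ is produced explicitly and the verification uses only the adjacency condition $\bigl|\frac14\|x-y\|_2^2-\frac{\ell}{4}\bigr|\le n/4$, the bound $\|w\|_2^2\le n/4$, and $|(y-x,w)|\le C_0/2\le n/4$, giving $\bigl|\,\|z-x\|_2^2-\ell/4\,\bigr|\le 3n/4$; the maximum-degree lemma uses the Parallelogram-Law identity $\|y-x'\|_2^2=2\|x-z\|_2^2+2\|y-z\|_2^2-\|x-y\|_2^2\le 4n$, again unchanged. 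Hence each $G^{(\ell)}$ is an edge-disjoint union of $O\bigl(N\,(10.5)^{2n}\bigr)$ induced matchings, for every $\ell$.

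Next I would partition $E(K_N)$ by the value of $\|x-y\|_2^2$, which lies in $\{1,\dots,n(C_0-1)^2\}$ for every edge. Splitting that range into the consecutive windows $W_j=\bigl((2j-2)n,\,2jn\bigr]$, $1\le j\le C:=\lceil(C_0-1)^2/2\rceil$, and setting $G_j=\{(x,y):\|x-y\|_2^2\in W_j\}$, the $G_j$ partition $E(K_N)$; and $G_j$ is a subgraph of $G^{(\ell_j)}$ for $\ell_j=(2j-1)n$, since $W_j\subseteq[\ell_j-n,\ell_j+n]$. Intersecting the induced-matching decomposition of $G^{(\ell_j)}$ with $E(G_j)$ then decomposes $G_j$ into at most $O\bigl(N\,(10.5)^{2n}\bigr)$ induced matchings, using that a sub-matching of an induced matching, restricted to a subgraph, is again an induced matching. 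Choosing $C_0=\lceil(10.5)^{2/\epsilon}\rceil$ gives $(10.5)^{2n}=\bigl((10.5)^{2/\epsilon}\bigr)^{\epsilon n}\le C_0^{\epsilon n}=N^{\epsilon}$, so each $G_j$ is covered by $O(N^{1+\epsilon})$ induced matchings while $C=O(C_0^2)=2^{O(1/\epsilon)}$. A routine rounding step --- take the least even $n$ with $C_0^n\ge N$, work inside $[C_0^n]$, and treat $N$ below a threshold $N_0(\epsilon)$ trivially --- removes the restriction that $N$ be a power of $C_0$.

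Finally, to go from the complete graph to $K_{N,N}$ I would use the device already used at the end of Section~\ref{sec:ecc}: apply the above to $K_{2N}$, obtaining a partition into $C$ graphs $\widehat G_1,\dots,\widehat G_C$ each decomposable into $O((2N)^{1+\epsilon})=O(N^{1+\epsilon})$ induced matchings, split the $2N$ vertices into classes $L,R$ of size $N$, and set $G_j=\widehat G_j\cap(L\times R)$; these partition $E(K_{N,N})$, and restricting each induced matching of $\widehat G_j$ to its $L$--$R$ edges yields an induced matching of $G_j$. There is no real obstacle here, only bookkeeping: one must pick the single constant $C_0=C_0(\epsilon)$ so that $(10.5)^{2n}$ stays below $N^{\epsilon}$ \emph{and} $C_0^2=2^{O(1/\epsilon)}$ simultaneously, and verify that all the $O(\cdot)$ constants --- including those lost in the $K_N\to K_{2N}$ and power-of-$C_0$ roundings --- depend on $\epsilon$ but not on $N$; and one should confirm, as sketched above, that the Section~\ref{sec:geom} lemmas are genuinely insensitive to the recentering $\mu\mapsto\ell$.
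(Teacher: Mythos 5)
Your proof is correct, but it follows a genuinely different route from the paper's. The paper's proof is probabilistic: it keeps the single graph $G=G^{(\mu)}$ from Section~\ref{sec:geom}, passes to a bipartite version, and takes $C$ independent random shifts of it (random permutations of the right-side labels). Since $G$ misses only $N^{2-\delta}$ edges with $\delta=2^{-O(1/\epsilon)}$, taking $C=O(1/\delta)=2^{O(1/\epsilon)}$ such shifts leaves, in expectation, fewer than one edge of $K_{N,N}$ uncovered; thus some choice of shifts covers all edges, each shifted graph inherits the $O(N^{1+\epsilon})$ induced-matching decomposition, and the paper remarks this can be derandomized by conditional expectations (a covering is then turned into a partition by assigning each edge to one covering graph). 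Your argument is instead fully deterministic and gives a partition directly: you observe that the two lemmas of Section~\ref{sec:geom} --- the existence of a witness $z$ placing both endpoints in $V_z$, and the degree bound via the Parallelogram Law --- depend only on the \emph{width} of the adjacency window around the center, never on the center being the mean $\mu$, so the whole construction re-centers to arbitrary $\ell$; then slicing the possible values of $\|x-y\|_2^2$ into $O(C_0^2)$ width-$2n$ intervals partitions $E(K_{2N})$ into $2^{O(1/\epsilon)}$ pieces, each a subgraph of some $G^{(\ell_j)}$, and restricting an edge-disjoint induced-matching decomposition of $G^{(\ell_j)}$ to a spanning subgraph yields such a decomposition of the subgraph. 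The two steps that need checking --- insensitivity of the geometric lemmas to the recentering, and preservation of induced matchings under restriction to a spanning subgraph --- you have identified and verified correctly. What you gain is an explicit partition with no randomness and no derandomization; what the paper's argument gains is that it only uses the missing-edge count and matching count of $G$, so it applies verbatim to any other dense RS-type construction. Quantitatively the two proofs land on the same bounds for $C$ and the number of matchings.
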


\begin{proof}
To obtain this theorem, we can instead invoke the construction 
in Section~\ref{sec:geom} to obtain a bipartite 
graph $G$ (obtained  
by splitting the vertices of the graph constructed in that
section into two equal parts and by keeping all edges that join 
vertices in the two parts). 
For each $i$, we can take $G_i$ to be a random shift 
of $G$ - i.e. we construct $G_i$ by permuting the labels of 
the vertices on the right randomly. 
$G$ is missing less than $N^{2 - \delta}$ edges, 
for $\delta =  2^{-O(\frac{1}{\epsilon})}$, and hence 
if we take $C =  2/\delta$ random shifts the expected number of 
edges that are not covered in any $G_i$ is less than one. 
Hence there is some choice of $G_1, G_2, ... G_C$ that covers the 
edges in the complete bipartite graph and yet the edges in 
each $G_i$ can be covered by at most $O(N^{1 + \epsilon})$  
induced matchings.  We note that the above proof can be derandomized
using the method of conditional expectations, that is, the graphs
$G_i$ can be generated  efficiently  and deterministically.
\end{proof}

Finally we mention a simple 
lower bound for the number of rounds needed, proved by Meshulam
\cite{Me}. This shows that for any
constant number of receivers a super-linear number of rounds is needed:

\begin{prop}[\cite{Me}]
For any partition of the edges of $K_{N, N}$ into $G_1, G_2, ... G_C$, the total number 
of induced matchings needed to cover  
$G_1, G_2, ... G_C$ is at least $b(C) N^{1 + 1 / (2^C -1)}$.
\end{prop}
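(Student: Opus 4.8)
The plan is to prove the bound by induction on $C$, carrying the induction over all balanced complete bipartite graphs $K_{m,m}$ (the statement for $K_{N,N}$ is the case $m=N$, with the same constant $b(C)$). For the base case $C=1$ the single graph $G_1$ equals $K_{m,m}$, and an induced matching in a complete bipartite graph can contain only one edge: two vertex-disjoint edges $(u_1,v_1),(u_2,v_2)$ with $u_1,u_2$ on the left automatically span the extra edge $(u_1,v_2)$, so those four endpoints do not induce a matching. Hence $K_{m,m}$ needs exactly $m^2$ induced matchings, which matches $b(1)\,m^{1+1/(2^1-1)} = b(1)\,m^2$ with $b(1)=1$.

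For the inductive step I would fix a partition $K_{m,m} = G_1 \cup \dots \cup G_C$, write $t_i$ for the number of induced matchings in a cover of $G_i$, and set $t = \sum_{i=1}^C t_i$, the quantity to bound below. Some part, say $G_1$ after relabelling, carries at least $m^2/C$ edges, so among the $t_1$ induced matchings covering $G_1$ there is one, $M$, of size $r \ge m^2/(C t_1)$. Let $A = \{a_1,\dots,a_r\}$ and $B=\{b_1,\dots,b_r\}$ be its left and right endpoint sets, with matching edges $(a_j,b_j)$. Because $M$ is \emph{induced} in $G_1$, the only edges of $G_1$ inside $A\times B$ are those of $M$. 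Now split the indices: set $A' = \{a_1,\dots,a_{\lfloor r/2\rfloor}\}$ and $B' = \{b_{\lfloor r/2\rfloor+1},\dots,b_r\}$. No edge of $M$ lies inside $A'\times B'$, so $G_1$ has no edges there at all, and consequently the complete bipartite graph on $A'\cup B'$ — whose two sides both have size $\lfloor r/2\rfloor$ — is partitioned by the restrictions $G_2|_{A'\times B'},\dots,G_C|_{A'\times B'}$. Since a restriction of an induced matching is an induced matching, each $G_i|_{A'\times B'}$ is still covered by at most $t_i$ of them.

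Applying the induction hypothesis to this partition of $K_{\lfloor r/2\rfloor,\lfloor r/2\rfloor}$ into $C-1$ parts yields $\sum_{i=2}^C t_i \ge b(C-1)\,\lfloor r/2\rfloor^{\alpha}$ with $\alpha = 1 + 1/(2^{C-1}-1)$. Combining with $t_1 \ge m^2/(Cr)$ gives $t \ge m^2/(Cr) + b(C-1)\,\lfloor r/2\rfloor^{\alpha}$. As this holds for the actual (unknown) value of $r$, it holds for the minimum over $1 \le r \le m$, and minimizing a function of the shape $P/r + Q r^{\alpha}$ produces a value of order $P^{\alpha/(\alpha+1)}Q^{1/(\alpha+1)}$; with $P = m^2/C$ this is of order $m^{2\alpha/(\alpha+1)}$, and since $\alpha = 2^{C-1}/(2^{C-1}-1)$ a direct computation gives $2\alpha/(\alpha+1) = 2^C/(2^C-1) = 1 + 1/(2^C-1)$, exactly the target exponent. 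Reading off the implied constant produces a recursion of the form $b(C) = c_C\, b(C-1)^{1/(\alpha+1)}$ with $c_C>0$ depending only on $C$, which defines a valid positive $b(C)$.

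I expect the only genuinely substantive step — and the main obstacle — to be the passage from ``$G_1$ contains a large induced matching'' to ``a large complete bipartite graph is covered by only $C-1$ of the parts'': this is precisely where the \emph{induced} hypothesis is essential, and the index-splitting trick is what lets us discard the missing perfect matching while keeping both sides of size $\Theta(r)$. Everything else is routine: handling small $r$ (if $\lfloor r/2\rfloor^{\alpha}$ is too small to help, e.g.\ $r=1$, then $t_1 \ge m^2/(Cr)$ already dwarfs the target), checking that the optimizing value of $r$ lies in the admissible range $[1,m]$ and that the boundary cases only make the bound larger, and the arithmetic bookkeeping needed to pin down $b(C)$.
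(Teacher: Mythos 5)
Your proof is correct and follows essentially the same route as the paper's: bound the size of a large induced matching in the densest part $G_1$ in terms of how many matchings cover it, split the endpoint sets of that matching to produce a complete bipartite subgraph avoided by $G_1$, recurse on $C-1$ parts, and optimize over the matching size. You fill in a few details the paper leaves implicit — the explicit $A'/B'$ index-splitting trick that converts ``$K_{r,r}$ minus a perfect matching'' into a genuine $K_{\lfloor r/2\rfloor,\lfloor r/2\rfloor}$, and the observation that an induced matching in a complete bipartite graph has a single edge, which is the paper's ``$C=1$ is trivial'' — but the argument is the same.
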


\begin{proof}
We apply induction on $C$, the result for $C=1$ is trivial. 
Consider the case $C = 2$. Without loss of generality, 
let $G_1$ contain at least half of the edges from the complete 
bipartite graph and suppose that the minimum number of induced 
matchings needed to cover $G_1$ is $N^r$. Then there is an 
induced matching (in this set) that contains at least 
$\frac{1}{2} N^{2 - r}$ edges and hence $G_2$ contains a complete 
bipartite graph where the number of vertices on the left and on the 
right is at least $\frac{1}{4} N^{2 -r}$. Hence the number of induced 
matchings needed to cover $G_2$ is at least $\frac{1}{16} 
N^{4 - 2r}$. Since the quantity $\max(N^r, N^{4 - 2r})$ 
is minimized for $r = 4/3$ the total number of induced 
matchings needed to cover $G_1$ and $G_2$ is at least $\Omega(N^{4/3})$. 

We can iterate the above argument in the general case. Without loss of 
generality let $G_1$ contain at least $\frac{1}{C} N^2$ edges 
and suppose the minimum number of induced matchings needed to 
cover $G_1$ is $N^{r}$. Then the union of $G_2, G_3, ... G_C$ 
contains a complete bipartite graph where the number of vertices 
on the left and on the right is at least $\frac{1}{2C} N^{2 -r}$.
We can assume by induction that the total number of 
induced matchings needed to cover $G_2, G_3, ... G_C$ 
is at least some $b'(C) N^{ (2-r)(1 + 1/(2^{C-1} -1))}$. 
The quantity $$\max(r, (2-r)\times \frac{2^{C-1}}{2^{C-1} -1})$$ 
is minimized for $r = \frac{2^C}{2^C -1}$ and this completes the proof. 
\end{proof}

Hence any protocol requires at least $\Omega(\log \frac{1}{\epsilon})$ receivers per station
to reduce the number of rounds to $O(N^{1 + \epsilon})$. In contrast, the protocol in Theorem~\ref{thm:close}
uses $2^{O(\frac{1}{\epsilon})}$ receivers per station to complete this same task in $O(N^{1 + \epsilon})$ rounds. 

\subsection{Linearity Testing}

Here we observe that our graphs can be plugged in 
the analysis of H{\aa}stad and Wigderson 
\cite{HW} of the graph test of Samorodnitsky and Trevisan \cite{ST} 
to provide a (modest) strengthening. 
We obtain slightly better bounds on the soundness of this test, 
which may be of interest for a particular range of the parameters. 

The classical linearity test of Blum, Luby and Rubinfeld chooses a 
pair of points $x$ and $y$ uniformly at random from the domain 
of a function, and checks if $f(x) + f(y) = f(x + y)$. The test 
accepts $f$ if and only if this condition is met, and indeed this 
test always accepts a linear function and if $f$ is not linear, 
the probability that this test accepts $f$ can be bounded 
by $\frac{1}{2} + \frac{d(f)}{2}$, where $d(f)$ is the maximum 
correlation of $f$ with a linear function \cite{BLR}. 

What if we want to reduce the probability that a function $f$ 
that is not linear passes this test? 
We could perform $r$ independent trials, in which case the probability 
that $f$ is accepted is bounded by $\Big ( \frac{1}{2} + \frac{d(f)}{2} 
\Big )^r$. However such a test queries the function $f$ on $3 r$ 
locations. Motivated by the problem of designing a PCP with optimal 
amortized query complexity and the related problem for linearity
testing,
Samorodnitsky and Trevisan introduced a graph-based linearity test: 
Associate each vertex in an $r$-vertex complete graph with a 
randomly chosen element from the domain of $f$, and for each 
edge check if $f(x) + f(y) = f(x + y)$ where $x$ and $y$ are the 
values associated with the endpoints of the edge. This test 
accepts if and only if all of these conditions are met. 

This test queries the function $f$ on $r + {r \choose 2}$ 
locations and the hope is that the soundness should behave 
approximately like ${r \choose 2}$ independent trials of the 
original linearity test \cite{BLR}. Samorodnitsky and Trevisan \cite{ST}
showed that the soundness of this test is bounded by 
$$\Big ( \frac{1}{2} \Big )^{r \choose 2} + d(f).$$ 

This analysis was subsequently simplified and 
improved by H{\aa}stad and Wigderson 
\cite{HW} - using the known existence 
of graphs that have many edges but can 
be covered by large (disjoint) induced matchings. 
The intuition behind this connection is that an induced matching 
corresponds to independent trials of the original Blum-Luby-Rubinfeld
linearity test (althoug the formal analysis somewhat masks this
intuition). H{\aa}stad and Wigderson \cite{HW} proved:

\begin{theorem}
If $G = (V, E)$ is an $(r, t)$-RS graph, then the graph-test 
for $G$ accepts a function $f$ with probability at most 
$$ e^{-rt/8} + d(f)^{r/4}.$$
\end{theorem}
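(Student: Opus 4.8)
The plan is to carry out a Fourier-analytic analysis of the graph test in the style of the standard analysis of the Blum--Luby--Rubinfeld test, exploiting the decomposition of $G$ into $t$ induced matchings $M_1,\dots,M_t$ of size $r$ to decouple the $N$-variable expectation that defines the acceptance probability. Write the domain of $f$ as $\F_2^m$, set $g=(-1)^f\co\F_2^m\to\bits$, and recall that $d(f)=\max_\alpha|\wh g(\alpha)|$ and $\sum_\alpha\wh g(\alpha)^2=1$. Sample $x_v\in\F_2^m$ independently and uniformly for $v\in V$; for an edge $e=(u,v)$ put $\chi_e=g(x_u)g(x_v)g(x_u+x_v)\in\bits$, so the check at $e$ succeeds iff $\chi_e=1$ and
$$A:=\Pr[\text{accept}]=\E\Big[\prod_{e\in E}\tfrac{1+\chi_e}{2}\Big].$$
First I would split $E$ according to the matchings. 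The only place the \emph{induced} hypothesis enters is this: inside $M_k$ the $r$ checks act on pairwise disjoint triples of variables, so the parity $Y_k:=\prod_{e\in M_k}\chi_e\in\bits$ satisfies $\E[Y_k]=\gamma^{r}$ with $\gamma:=\sum_\alpha\wh g(\alpha)^3$, and $|\gamma|\le\max_\alpha|\wh g(\alpha)|\cdot\sum_\alpha\wh g(\alpha)^2=d(f)$. Since $\prod_{e\in M_k}\tfrac{1+\chi_e}{2}\le\tfrac{1+Y_k}{2}$ (both sides equal $1$ when every $M_k$-check passes, and the left side is $0$ otherwise while the right side is nonnegative), we get $A\le\E\big[\prod_{k=1}^{t}\tfrac{1+Y_k}{2}\big]$.

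Next I would apply the elementary bound $\tfrac{1+y}{2}\le e^{-(1-y)/2}$ for $y\in\bits$ to each of the $t$ factors and use $e^{y/2}=\cosh\tfrac12\,(1+\theta y)$ with $\theta:=\tanh\tfrac12$, obtaining
$$A\le e^{-t/2}\big(\cosh\tfrac12\big)^{t}\,\E\Big[\prod_{k=1}^{t}(1+\theta Y_k)\Big]=\Big(\tfrac{1+e^{-1}}{2}\Big)^{t}\Big(1+\sum_{\emptyset\ne S\subseteq[t]}\theta^{|S|}\,\E\big[\textstyle\prod_{k\in S}Y_k\big]\Big).$$
The one substantive estimate I need is $\big|\E[\prod_{k\in S}Y_k]\big|\le d(f)^{\,r/4}$ for every nonempty $S$ (one can do better, but $d(f)^{r/4}$ suffices and comes out of a robust argument). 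To prove it I would expand every $g$ in characters, so that $\E[\prod_{k\in S}Y_k]$ becomes a sum of products $\prod\wh g(\cdot)$ over the solutions of a homogeneous $\F_2$-linear system in the Fourier indices (one equation per vertex $v$: the indices attached to the characters involving $x_v$ must sum to zero). Because the $r|S|$ ``combined'' characters $x_a+x_b$, for $(a,b)$ ranging over the edges of the matchings in $S$, are pairwise distinct, a couple of applications of Cauchy--Schwarz bound this sum by $\|\wh g\|_\infty^{\,\Omega(r)}\,\|\wh g\|_2^{\,O(1)}=d(f)^{\,r/4}$; overlaps between two distinct matchings only delete characters (a shared vertex kills two copies of $g(x_v)$) and so only shorten the system. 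Granting this, $\sum_{\emptyset\ne S}\theta^{|S|}\big|\E[\prod_{k\in S}Y_k]\big|\le d(f)^{r/4}(1+\theta)^{t}$, and since $\tfrac{1+e^{-1}}{2}(1+\theta)=1$ we get $A\le\big(\tfrac{1+e^{-1}}{2}\big)^{t}+d(f)^{\,r/4}$. This already has the shape of the statement, and for $r\le 3$ it \emph{is} the statement; to get $rt$ rather than $t$ in the first exponent for larger $r$ I would not collapse each matching but apply the exponential-moment step to all $rt$ edges individually, bound $\E[\prod_{e\in E'}\chi_e]$ for $E'\subseteq E$ by the same character-counting, and then a short case analysis in $r,t,d(f)$ yields exactly $e^{-rt/8}+d(f)^{r/4}$.

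I expect the main obstacle to be precisely this character-counting estimate. A sub-(multi)graph $E'$ of a Ruzsa--Szemer\'edi graph can contain short cycles---for instance a triangle formed by three edges from three different matchings, whose $\chi$-product contributes only $\gamma$ rather than $\gamma^3$---so one cannot bound $\E[\prod_{e\in E'}\chi_e]$ edge by edge. The content is that the induced-matching structure forbids $E'$ from being simultaneously large and low in cancellation: any $E'$ meeting the matchings densely leaves $\Omega(r)$ Fourier indices free relative to a pivot set and hence pays $d(f)^{\Omega(r)}$. This is exactly where the hypothesis that $G$ splits into induced matchings of size $r$---and not into arbitrary sparse pieces---is used, and where the constants $1/8$ and $1/4$ get pinned down. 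The remaining ingredients (the exponential-moment inequality, summing the geometric series in $\theta$, and the parameter case analysis) are routine.
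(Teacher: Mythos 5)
The paper gives no proof here; it states the result and cites H{\aa}stad and Wigderson \cite{HW}. Your proposal follows essentially that approach, but the argument you actually carry out proves a strictly weaker inequality, and the repair is asserted rather than carried out. With $\theta=\tanh\frac12$, the matching-level compression $\prod_{e\in M_k}\frac{1+\chi_e}{2}\le\frac{1+Y_k}{2}$ yields $\mathrm{Acc}\le\bigl(\frac{1+e^{-1}}{2}\bigr)^t+d(f)^{r}$. The first term is $e^{-\Theta(t)}$, not $e^{-\Theta(rt)}$: collapsing each $M_k$ to a single parity $Y_k$ forgets that the matching gives $r$ independent chances to reject, which is exactly where the factor $r$ in the first exponent must come from. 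You flag this and say you would redo it edge-by-edge, but that redo is the entire remaining content. The clean way (no exponential-moment step needed) is the exact expansion $\mathrm{Acc}=2^{-rt}\sum_{E'\subseteq E}\E\bigl[\prod_{e\in E'}\chi_e\bigr]$, split on $\max_k|E'\cap M_k|$: when that max is $\ge r/4$, the character-counting lemma gives $|\E[\cdot]|\le d(f)^{r/4}$, so these $E'$ contribute at most $d(f)^{r/4}$ overall; when it is $<r/4$ and $E'\neq\emptyset$, bound by $1$ and count---there are at most $\bigl(\sum_{i<r/4}\binom{r}{i}\bigr)^t\le 2^{H(1/4)rt}$ such $E'$, and since $(1-H(1/4))\ln 2>1/8$ their contribution is below $e^{-rt/8}$.

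You also have the role of the induced hypothesis mislocated. It is not used in $\E[Y_k]=\gamma^r$---any matching, induced or not, gives variable-disjoint triples and hence that identity. It is used, and is essential, in the estimate $|\E[\prod_{e\in E'}\chi_e]|\le d(f)^{\max_k|E'\cap M_k|}$: inducedness of $M_k$ in $G$ passes to inducedness of $E'\cap M_k$ in $(V,E')$, so no other edge of $E'$ joins two $(E'\cap M_k)$-covered vertices. One can then pair the matching endpoints $(a_i,b_i)$, condition on every $x_v$ outside the matching, and bound each pair-factor $\E_{x,y}[g(x)g(y)g(x+y)P(x)Q(y)]$ by $\|\widehat g\|_\infty\|\widehat{gP}\|_2\|\widehat{gQ}\|_2\le d(f)$; without inducedness some $P,Q$ depend on both $x$ and $y$ and the factorization collapses. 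Your Cauchy--Schwarz sketch is in the right spirit (and in fact gives $d(f)^{r}$, not merely $d(f)^{r/4}$, for $|\E[\prod_{k\in S}Y_k]|$), but it must be applied to arbitrary $E'\subseteq E$, not just unions of whole matchings, to obtain the stated theorem.
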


H{\aa}stad and Wigderson \cite{HW} used the construction of 
Ruzsa and Szemer\'edi \cite{RS} mentioned in the introduction, which
shows that
there are $(r,t)$-RS graphs on $N$ vertices with
$r=\frac{N}{e^{O(\sqrt {\log N})}}$ and $t=N/3$.

We can plug our constructions directly into this theorem to 
obtain slightly better bounds, for some special values of $d(f)$. 
Our constructions are dense, and hence improve the first term in 
the bound, but the second term is slightly worse (although we still 
have $r = N^{1 - o(1)}$). In general, the obtained  bounds 
will be better than either of those in \cite{ST} or \cite{HW} for some
values of $d(f)$. Note that as the complete graph on $N$ vertices
contains every graph on $N$ vertices, these bounds, like the ones of
\cite{ST} and \cite{HW}, provide an upper estimate for the probability 
that the complete graph linearity test on $N$ vertices accepts
a function $f$, showing that it is at most
$$
\mbox{min} ( ~2^{-{N \choose 2}} +d(f), 2^{-N^{2-o(1)}}+d(f)^{N^{1-o(1)}},
2^{-\Omega(N^2)}+d(f)^{N^{1-o'(1)}} ~).
$$
The first term in the minimum  is the bound of \cite{ST}, the second 
is that of \cite{HW}, and the third (in which the $o'(1)$ term is
a bit worse than the one in the second) follows from our graphs.

\subsection{ The Directed Steiner Tree Problem}

In this short subsection we briefly note the connection between our constructions
and a candidate randomized rounding algorithm for the directed Steiner tree problem
that motivated Vempala \cite{Ve} to ask about the existence of certain
$(r,t)$-RS graphs. 

Giving a poly-logarithmic approximation algorithm for the directed Steiner tree problem
is a famous open problem in approximation algorithms. A special case is
the group Steiner tree problem (in an undirected graph), for which Garg, Konjevod and Ravi
gave an elegant, poly-logarithmic approximation algorithm \cite{GKR}. Charikar et al \cite{CCC}
give an approximation algorithm for the directed Steiner tree problem whose approximation guarantee
is $\tilde O(N^{\epsilon})$ for any $\epsilon > 0$, and this guarantee can be made poly-logarithmic
at the cost of running in quasi-polynomial time. 

Even our understanding of the naive linear programming relaxation is quite weak. Zosin and Khuller \cite{ZK}
give a $\Omega(\sqrt{k})$ integrality gap (where $k$ is the number of terminals), but this construction has exponentially many
(in $k$) vertices. Hence we could still hope that the naive relaxation has at most a poly-logarithmic (in $N$) integrality gap. 

Rajaraman and Vempala considered a stronger
relaxation and a candidate rounding algorithm.
In the case in which the support of
the solution to the linear program is a tree, they proved that 
their rounding algorithm achieves a poly-logarithmic
approximation ratio and this analysis is reminiscent of 
the rounding procedure for the group Steiner tree problem \cite{GKR}.

However, even when flow merges in one layer of a layered graph (i.e. when
the fractional solution is not supported on a tree), attempting to analyze
the behavior of the rounding algorithm led Vempala to a combinatorial
conjecture:

\begin{conjecture} \cite{Ve}
Let $G = (U, V, E)$ be an $N \times k$ complete bipartite graph and $N
\geq k$. Let $\cP$ be a partition of the edge set and for a part $p \in
\cP$, let $p_i$ denote the degree of vertex $i$ in $p$ (i.e. the number
of edges of $p$ incident to $i$). Then 
$$
\sum_{i \in U, j \in V} \min
\Big ( 1, \sum_{p \in \cP} \frac{p_i p_j}{|p|} \Big ) \geq C \frac{N
k}{\log N}.
$$

\end{conjecture}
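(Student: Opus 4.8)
The quantity to be controlled is, for $i\in U$ and $j\in V$, $f_{ij}:=\sum_{p\in\cP}\frac{p_ip_j}{|p|}$, and the plan is first to record the exact total mass of the matrix $(f_{ij})$ and then to argue that truncating every entry at $1$ destroys at most a logarithmic factor of it. Since $\cP$ partitions the $Nk$ edges of the complete bipartite graph, and for each part $p$ one has $\sum_{i\in U}p_i=\sum_{j\in V}p_j=|p|$, summing $p_ip_j/|p|$ over all pairs gives $|p|^2/|p|=|p|$, whence $\sum_{i\in U,\,j\in V}f_{ij}=\sum_{p\in\cP}|p|=Nk$. So the desired inequality $\sum_{i,j}\min(1,f_{ij})\ge C\,Nk/\log N$ is exactly the assertion that the mass $Nk$ cannot be concentrated on fewer than about $Nk/\log N$ pairs at a level far above $1$.

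First I would reduce to such a concentration statement. Split the pairs into $A=\{(i,j):f_{ij}\ge 1\}$ and $B=\{(i,j):f_{ij}<1\}$, so the left-hand side equals $|A|+\sum_{(i,j)\in B}f_{ij}$. If the pairs in $B$ already carry a constant fraction of $Nk$ we are done; otherwise almost all of the mass lies on $A$, and it then suffices to prove $|A|=\Omega(Nk/\log N)$, i.e. that the average of $f_{ij}$ over $A$ is $O(\log N)$. Since $\min(1,f_{ij})\ge f_{ij}/M$ with $M=\max_{i',j'}f_{i'j'}$ (and if $M<1$ the left-hand side is already $Nk$), the inequality is immediate when the largest entry is $O(\log N)$; the actual work is a dyadic version of this, bounding the mass in each band $\{2^{t-1}\le f_{ij}<2^{t}\}$ and showing that the bands with $2^t\gg\log N$ together hold only a $o(1)$ fraction of $Nk$.

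The heart of the matter, and where I expect the main obstacle, is bounding how concentrated a single part's contribution can be. A part $p$ with $|p|=m$ and support of size $u_p$ on the left and $v_p$ on the right spreads its mass $m$ over the $u_pv_p$ pairs in its support; a pair $(i,j)$ can receive substantially more than $1$ from $p$ only when $p$ is globally sparse but locally dense at $(i,j)$ — essentially when $p$ contains a large star at $i$ together with a disjoint large star at $j$, in which case the contribution is about $p_ip_j/(p_i+p_j)$ while $p$ spends roughly $p_i+p_j$ edges. The plan is then to charge the excess $f_{ij}-1$ over $A$ against such double-star configurations and use that each edge of the complete bipartite graph lies in exactly one part to cap the total amount of double-star structure available, yielding $\sum_{(i,j)\in A}(f_{ij}-1)\le(1-\Omega(1/\log N))\,Nk$ and hence $|A|=\Omega(Nk/\log N)$. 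The delicate point — and the step I expect to demand the most care — is that many parts may simultaneously look, near various pairs, like unions of several overlapping large stars, and disentangling these overlapping contributions without over-counting the shared edges is exactly the regime in which very dense graphs decomposable into large induced matchings live; locating the logarithmic slack there is the crux of the argument.
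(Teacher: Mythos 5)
There is a fundamental problem: the statement you have been handed is a \emph{conjecture} that the paper \emph{disproves}, not a theorem that it proves, so any attempt to establish the lower bound is doomed. The remarkable thing is that your own sketch identifies exactly where and why: you flag as the crux ``the regime in which very dense graphs decomposable into large induced matchings live,'' and observe that the logarithmic slack has to be located there. That regime is not a hard case to push through --- it is a counterexample. The main content of the paper is precisely that nearly complete graphs on $N$ vertices exist that decompose into at most $O(N^{f})$ pairwise disjoint induced matchings (with $f<2$) while missing only $O(N^{e})$ edges (with $e<2$); Theorem~\ref{thm:ecccon} gives both $e,f<1.942$.

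Here is how such a graph kills the conjectured bound, which is what the paper actually does. Take $N=k$ and let $H$ be a bipartite nearly complete $(r,t)$-RS graph on $N+N$ vertices as above. Let $\cP$ consist of the $t=O(N^{f})$ induced matchings of $H$ together with one singleton part for each of the $O(N^{e})$ non-edges of $H$. Your global mass computation $\sum_{i,j}f_{ij}=Nk=N^{2}$ is correct, but that mass concentrates overwhelmingly on the few non-edges. Indeed, for an edge $(i,j)$ of $H$ covered by the matching $p$, every other induced matching $q$ with $i\in L_q$, $j\in R_q$ would have to contain $(i,j)$ by inducedness, so $f_{ij}=1/|p|\le 1$ and
$$\sum_{(i,j)\in H}\min\!\bigl(1,f_{ij}\bigr)=\sum_{(i,j)\in H}\frac{1}{|p_{ij}|}=\sum_{p}\,\sum_{(i,j)\in p}\frac{1}{|p|}=t=O(N^{f}).$$
For the non-edges we simply bound each truncated term by $1$, giving at most $O(N^{e})$. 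Hence the left-hand side of the conjectured inequality is $O(N^{e}+N^{f})=O(N^{1.942})=o(N^{2}/\log N)$, falsifying it. Your dyadic-band plan, if carried out honestly, would discover that the high bands (where $f_{ij}\gg\log N$) absorb nearly all of the $N^{2}$ mass on only $O(N^{1.942})$ pairs, exactly because the non-edges of $H$ are few but each lies in $L_p\times R_p$ for polynomially many matchings $p$. The ``double-star'' charging you propose cannot cap this: the configurations you would need to exclude are not pathological, they are abundant once Meshulam's conjecture (that such dense decomposable graphs do not exist) is refuted, which is the point of the paper.
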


Our constructions yield a negative answer to the above conjecture. In
our negative example we have $N = k$.
To obtain this result, we can instead invoke the construction
in Section~\ref{sec:ecc} to obtain a bipartite graph $H$ (obtained by
duplicating the vertices of the graph constructed in that section). We
can take $\cP$ to be the induced matchings covering $H$ and additionally
we add a part in the partition (consisting of a single edge) for each
edge across the bipartition missing from $H$.

We can upper bound the right hand side as: 
$$
\sum_{i \in U, j \in V}
\min \Big ( 1, \sum_{p \in \cP} \frac{p_i p_j}{|p|} \Big ) \leq \sum_{(i,
j) \in H} \sum_{p \in \cP} \frac{p_i p_j}{|p|} + \sum_{(i, j) \notin H}
1$$ 
$H$ is an $(r,t)$-RS graph (and $r = \Omega(N^{2 - f })$ in our
construction) and so for each part $p$ we have $ \sum_{(i, j) \in H}
\frac{p_i p_j}{|p|} = 1$ because $p$ is an induced matching with respect
to $H$. The number of parts in the partition (ignoring singletons,
which are not in $H$ anyways) is at most $O(N^f)$ and so we can bound
the contribution of the first term by $O(N^f)$. Also, the number of
edges that $H$ is missing (across the bipartition) is at most $O(N^{e})$
and hence we can bound the above sum by $O(N^e + N^f)$ for $e$ and $f$
as in Theorem~\ref{thm:ecccon} (recall that $N = k$). Since we 
can have both
$e$ and $f$ at most $1.942$ it follows that the conjecture is false.

\section{Concluding Remarks and Open Questions}

We have given two constructions of nearly complete graphs that
can be decomposed into large pairwise edge disjoint induced matchings
and described several applications of these graphs.

The main combinatorial open problem that remains is to determine
or estimate more precisely the set of all pairs $(r,t)$ so that
there are $(r,t)$-RS graphs on $N$ vertices. This is interesting
for most values of the parameters, but is of special interest in some
specific range. In particular, if for
$r=\frac{N}{(\log N)^g}$ with $g>1$, one can show that 
$t=o(N)$, this will 
improve the best known upper bound   for the maximum possible 
cardinality of a subset of $\{1,2,\ldots ,N\}$ with no $3$-term
arithmetic progressions-a problem that received a considerable amount
of attention over the years (see \cite{Sa} and its references).

The study of the combinatorial problem above seems to require a variety of
techniques: the known constructions of \cite{RS}, \cite{FNRRS}, \cite{Me}
and the ones given here apply tools from Additive Number Theory,
Coding Theory, low degree representations of Boolean functions
and Geometry, while the proofs of non-existence
rely on the regularity lemma and on combinatorial and entropy based 
techniques.  All of these, however, still leave a wide gap between
the upper and lower bounds for at least some of the range, and it will
be interesting to find additional ideas that will help to 
study this problem.

In all the applications considered here there are still remaining 
open problems. The
communication protocols over a shared directional multi-channel we suggest,
while improving  substantially the existing ones, are still not optimal,
and the problem of deciding the best possible number of rounds for $N$
stations, even with
two receivers per station, is still not settled, although our results
show that it is $N^{2-\delta}$ for some $\delta$ between $0.058$ and
$2/3$. The best possible upper bound for the probability of acceptance
of a function $f$ in
the linearity graph test, using a complete graph of size $N$, is also
not precisely determined 
as a function of $N$ and $d(f)$ (although here the gap between
the upper bounds and the lower bounds is not large-see \cite{HW}.)
Finally, it will be interesting to decide if our graphs can be helpful
in establishing new integrality gap results for the natural  relaxation
of the directed Steiner tree problem, rather than merely estimating
the performance of specific rounding schemes.
\vspace{0.3cm}

\noindent
{\bf Acknowledgment}
We thank Roy Meshulam and Santosh Vempala for helpful comments.

\newpage

\end{document}